\newcommand{\ine}{\operatorname{Ine}}
\newcommand{\ess}{\operatorname{Ess}}
\newcommand{\abs}[1]{\left\lvert{#1}\right\rvert}
\newcommand{\norm}[1]{\left\|{#1}\right\|}
\DeclareMathOperator{\bd}{\partial}
\DeclareMathOperator{\cl}{cl}
\newcommand{\ol}{\overline}
\renewcommand{\hat}{\widehat}
\newcommand{\R}{\mathbb{R}}\newcommand{\N}{\mathbb{N}}
\newcommand{\Z}{\mathbb{Z}}
\newcommand{\T}{\mathbb{T}}
\newcommand{\sm}{\setminus}
\newcommand{\ie}{i.e.\ }
\newtheorem{theorem}{Theorem}
\newtheorem{corollary}[theorem]{Corollary}
\newtheorem{lemma}[theorem]{Lemma}
\newtheorem{proposition}[theorem]{Proposition}
\newtheorem{claim}{Claim}
\newtheorem*{theorem*}{Theorem}
\theoremstyle{definition}
\theoremstyle{remark}
\begin{document}
\title[Rotation sets with nonempty interior and transitivity]{Rotation sets with nonempty interior\\ and transitivity in the universal covering}
\author{Nancy Guelman}
\address{Nancy Guelman. IMERL, Facultad de Ingenier\'\i a, Universidad de la Rep\'ublica, C.C. 30, Montevideo, Uruguay}
\email{nguelman@fing.edu.uy}
\author{Andres Koropecki}
\address{Andres Koropecki. Universidade Federal Fluminense, Instituto de Matem\'atica e Estat\'\i stica, Rua M\'ario Santos Braga S/N, 24020-140 Niteroi, RJ, Brasil}
\email{ak@id.uff.br}
\author{Fabio Armando Tal}
\address{Fabio Armando Tal. Instituto de Matem\'atica e Estat\'\i stica, Universidade de S\~ao Paulo, Rua do Mat\~ao 1010, Cidade Universit\'aria, 05508-090 S\~ao Paulo, SP, Brazil}
\email{fabiotal@ime.usp.br}
\thanks{The first author was supported by Grupo de investigaci\'on de Sistemas Din\'amicos CSIC 618, Universidad de la Rep\'ublica, Uruguay.
The second author was supported by CNPq-Brasil. The third author was partially supported by CNPq-Brasil and FAPESP}
\keywords{Torus homeomorphisms, rotation set, transitivity}

\begin{abstract}
Let $f$ be a transitive homeomorphism of the two-dimensional torus in the homotopy class of the identity. We show that a lift of $f$ to the universal covering is transitive if and only if the rotation set of the lift contains the origin in its interior.
\end{abstract}

\maketitle

\section{Introduction}
 Given an homeomorphism $f$ of the torus $\T^2=\R^2/\Z^2$ which is in the homotopy class of the identity, and a lift $\hat{f}\colon \R^2\to \R^2$ of $f$ to the universal covering, one can associate to $\hat{f}$ its rotation set $\rho(\hat f)$,  a convex and compact subset of the plane consisting of all the limit points of sequences of the form $\lim_{k\to\infty}(\hat f^{n_k}(x_k)-x_k)/n_k$, with $n_k\to \infty$ and $x_k\in \R^2$. This definition was introduced by Misiurewicz and Ziemian in \cite{MZ89} as a generalization of the rotation number for circle homeomorphisms, and has proved to be a useful tool in the study of the dynamics of these homeomorphisms.
In particular, when the rotation set has nonempty interior it has been shown that $f$ exhibits very rich dynamics, with an abundance of periodic points and positive topological entropy \cite{F89, LM91}. But this complex behavior is not restricted to rotation sets with nonempty interior: there are several examples of homeomorphisms with rich dynamical properties such that their rotation set is a singleton.

In this work we are concerned with the interplay between transitivity of $f$, transitivity on the universal covering space and rotation sets. Our motivation in studying transitivity on the universal covering space is to understand when it is possible for the dynamical system to exhibit this form of extreme transitivity, where there exist trajectories that are not only dense on the surface, but also explore all possible loops and directions. This is a strictly more stringent concept, as ergodic rotations of the $2$-torus do not lift to transitive homeomorphisms of the plane. There have been some prior works concerning transitivity of the lifted dynamics of surface homeomorphisms. 
In \cite{Boyland}, Boyland defines $H_1$-transitivity for a surface homeomorphism as the property of having an iterate which lifts to the universal Abelian covering as a transitive homeomorphism, and he characterized $H_1$-transitivity for rel pseudo-Anosov maps. 

 In \cite{guelman-et-al} it was proven that a $C^{1+\alpha}$ diffeomorphism of $\T^2$ homotopic to the identity and with positive entropy has a rotation set with nonempty interior if $f$ has a transitive lift to a suitable covering of $\T^2$. In \cite{AZT1} and \cite{AZT2}, homeomorphisms of the annulus with a transitive lift were studied, and it was shown that if there are no fixed points in the boundary then the rotation set of the lift contains the origin in its interior.
 
 More relevant to this work is the result from \cite{Ta1}, where it is shown that if $f$ is a homeomorphism of $\T^2$ homotopic to the identity which has a transitive lift $\hat{f}$ to $\R^2$, then the origin lies in the interior of $\rho(\hat f)$. Our main result is the converse of this fact:
\begin{theorem}\label{th:transitivoemcima}
Let $f$ be a transitive homeomorphism of $\T^2$ homotopic to the identity, and let $\hat f$ be a lift of $f$ to $\R^2$. If $(0,0)$ belongs to the interior of $\rho(\hat f)$ then $\hat f$ is transitive.
\end{theorem}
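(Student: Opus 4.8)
The plan is to first reduce the statement to a more symmetric one and then feed it to the transverse‑foliation and forcing machinery of Le Calvez and Tal.

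\emph{Reduction.} It suffices to show that for every nonempty open $U\subseteq\R^2$ and every $v\in\Z^2$ there is $n\in\Z$ with $\hat f^{\,n}(U)\cap(U+v)\neq\emptyset$. Indeed, given nonempty open sets $U,V\subseteq\R^2$, transitivity of $f$ together with the openness of the covering projection $\pi\colon\R^2\to\T^2$ (and the fact that $\hat f$ commutes with integer translations) yields $m\in\Z$ and $w\in\Z^2$ with $W:=\hat f^{\,m}(U)\cap(V+w)\neq\emptyset$. Applying the reduced statement to the open set $W$ and the vector $-w$ produces $n$ with $\hat f^{\,n}(W)\cap(W-w)\neq\emptyset$; since $\hat f^{\,n}(W)\subseteq\hat f^{\,n+m}(U)$ and $W-w\subseteq V$, this gives $\hat f^{\,n+m}(U)\cap V\neq\emptyset$, i.e.\ transitivity of $\hat f$.

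\emph{What the hypothesis provides.} Because $(0,0)\in\inter\rho(\hat f)$, a whole neighbourhood of the origin lies in $\inter\rho(\hat f)$; in particular, for $N$ large, $\tfrac{1}{N}v$, $-\tfrac{1}{N}v$ and, more generally, rational vectors containing the origin in the interior of their convex hull all lie in $\inter\rho(\hat f)$, hence by Franks' realization theorem are rotation vectors of periodic orbits. This is precisely the regime in which $f$ is neither inessential (rotation set a point) nor annular (rotation set contained in a line), and in which a maximal isotopy $I$ from $\id$ to $f$ has $\T^2\setminus\fix(I)\neq\emptyset$; on this set one has a transverse foliation $\mathcal F$ in the sense of Le Calvez--Tal, lifting to a foliation $\hat{\mathcal F}$ of $\R^2\setminus\widetilde{\fix(I)}$. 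The next step is to upgrade the periodic orbits above to \emph{transverse loops} $\Gamma_1,\dots,\Gamma_k$ in $\mathcal F$ whose homology classes in $H_1(\T^2;\Z)\cong\Z^2$ contain the origin in the interior of their convex hull and whose lifts to $\hat{\mathcal F}$ pairwise cross one another $\hat{\mathcal F}$-transversally. The availability of such mutually crossing loops is exactly what ``the origin in the interior'' buys, and is the sort of statement one would isolate as a preliminary lemma.

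\emph{Forcing, gluing, and the main obstacle.} With these crossing transverse loops at hand, the Le Calvez--Tal forcing lemma lets one concatenate them and their integer translates freely, so that for any prescribed $w\in\Z^2$ there is $N=N(w)$ and a point whose $\hat f^{\,N}$-trajectory admits a transverse path realizing the homological displacement $w$ and returning, modulo $\Z^2$, to a chosen neighbourhood of its initial leaf. To connect $U$ with $U+v$ one then chains three pieces: use transitivity of $f$ to steer $\pi(U)$ into a foliated box meeting the loop configuration, apply forcing to acquire a suitable displacement, and use transitivity of $f$ once more to come back to $\pi(U)$; lifting the whole concatenation and choosing the forced displacement so as to cancel the (a priori uncontrolled) integer translations contributed by the two transitivity steps, one arranges the net translation over the trip to be exactly $v$. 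The delicate part—and the expected main obstacle—is precisely this last bookkeeping: one must know that the set of displacements realizable by forced concatenations of the $\Gamma_i$ is ``full'' enough (ideally all of $\Z^2$) to absorb the error, and one must check that the transverse paths obtained by splicing the ``transitivity'' pieces onto the ``forcing'' pieces remain admissible—i.e.\ that the $\hat{\mathcal F}$-transverse crossings required by the forcing lemma survive the splicing. Establishing this compatibility between the topological transitivity of $f$ downstairs and the homological forcing of $\hat f$ upstairs, together with dispatching the degenerate isotopy configurations, is where the essential work lies; the reduction and the production of crossing transverse loops are comparatively routine once the cited machinery is available.
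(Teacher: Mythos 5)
Your sketch leaves the essential step unproven, and you say so yourself: the ``delicate part'' you flag—showing that the forced concatenations of the transverse loops $\Gamma_i$ realize a full set of integer displacements, and that the transverse paths obtained by splicing transitivity excursions onto forcing excursions remain $\hat{\mathcal F}$-admissible—is exactly where the theorem lives, and you offer no argument for it. What you have written is a plausible research program, not a proof. A second, less conspicuous gap is in the reduction: after producing the small open set $W$ you apply the ``reduced statement'' to $W$ as if it held for arbitrary nonempty open sets, but any argument (including the paper's) that actually runs will need $W$ to satisfy a nontrivial rigidity hypothesis. In the paper this takes the form of requiring $\overline{\pi(O)}$ inessential and $\bigcup_n f^n(\pi(O))$ fully essential, and verifying the latter for a small ball is not automatic: it rests on a recent structure theorem (Theorem C of \cite{KoTa}) that rules out unbounded periodic topological disks when the rotation set has interior. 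Your sketch does not acknowledge that this verification is needed nor how it would be done.

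For comparison, the paper avoids the transverse-foliation and forcing machinery entirely (which in any case was developed after this paper). It instead uses only Franks' realization theorem together with elementary Brouwer theory (translation arcs, Brown's lemmas \ref{lm:disjunto1vezdisjuntosempre} and \ref{lm:Brouwerline}) and a combinatorial pigeonhole scheme on the sets $I(j,v)=\{w:\hat f^{\,j}(T_v(O))\cap T_w(O)\neq\emptyset\}$. The key geometric input replacing your ``forcing'' step is Lemma \ref{lm:inbetween}: if a connected set meets two distinct integer translates of another, it meets the one in between, and this produces a translation arc; iterating this via Propositions \ref{pr:pintasegmento}--\ref{pr:intersquadrantes} and the two Claims yields a contradiction with the assumption $\hat f^{\,n}(O)\cap T_{\bar w}(O)=\emptyset$. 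So even if you could complete the forcing argument, it would be a genuinely different and much heavier route than the one taken here; as it stands it is incomplete precisely at the point of greatest difficulty.
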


Note that, since the fundamental group of $\T^2$ is abelian, the notion of $H_1$-transitivity (as defined in \cite{Boyland}) in the case of a homeomorphism $f$ of $\T^2$  is equivalent to saying that some iterate of $f$ has a lift to $\R^2$ which is transitive.
As an immediate consequence of theorem \ref{th:transitivoemcima} together with the main result of \cite{Ta1}, we have a characterization of $H_1$-transitive homeomorphisms of $\T^2$ in the homotopy class of the identity: 

\begin{corollary} A homeomorphism of $\T^2$ in the homotopy class of the identity is $H_1$-transitive if and only if it has (a lift with) a rotation set with nonempty interior.
\end{corollary}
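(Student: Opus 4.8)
The plan is to reduce transitivity of $\hat f$ to a recurrence property in the covering, and then to establish that property from the periodic orbits forced by the hypothesis. Since $\R^2$ is a Polish space without isolated points, Baire's theorem shows that $\hat f$ is transitive if and only if for every pair $U,V$ of nonempty open subsets of $\R^2$ there is $m\geq0$ with $\hat f^m(U)\cap V\neq\emptyset$. Transitivity of $f$ on $\T^2$ gives, for such $U,V$, an $n_0\geq0$ and a vector $w_0\in\Z^2$ with $\hat f^{n_0}(U)\cap(V+w_0)\neq\emptyset$ (apply transitivity to $\pi(U),\pi(V)$ and lift). Hence it suffices to prove that
\begin{equation}\label{eq:dagger}
\text{for every nonempty open }W\subseteq\R^2\text{ and every }w\in\Z^2\text{ there is }m\geq0\text{ with }\hat f^m(W)\cap(W+w)\neq\emptyset,
\end{equation}
and then apply \eqref{eq:dagger} with $W=\hat f^{n_0}(U)\cap(V+w_0)$ and $w=-w_0$, which yields $\hat f^{n_0+m}(U)\cap V\neq\emptyset$. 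To prove \eqref{eq:dagger} I would work with
\[
G(W)=\bigl\{w\in\Z^2:\ \text{for every nonempty open }W'\subseteq W\text{ there is }m\geq0\text{ with }\hat f^m(W')\cap(W+w)\neq\emptyset\bigr\}.
\]
Using the $\Z^2$-equivariance of $\hat f$ one checks that $G(W)$ is a sub-monoid of $(\Z^2,+)$ containing $0$: given $w_1,w_2\in G(W)$ and $W'\subseteq W$, realize $w_1$ from $W'$, pass to a nonempty open subset of $W+w_1$, translate it by $-w_1$ back into $W$, realize $w_2$ from there, and concatenate. Moreover a sub-monoid of $\Z^2$ containing vectors $v_1,\dots,v_r$ with $0\in\inter\conv\{v_1,\dots,v_r\}$ and $\gen{v_1,\dots,v_r}=\Z^2$ must equal $\Z^2$ (a relation $\sum\mu_iv_i=0$ with $\mu_i\in\Z_{>0}$ lets one convert arbitrary integer combinations into nonnegative ones). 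So everything reduces to exhibiting such a family inside $G(W)$.

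Here the hypothesis enters. Since $\rho(\hat f^N)=N\,\rho(\hat f)$, for all large $N$ the rational vectors $(1,0),(0,1),(-1,-1)$ lie in $\inter\rho(\hat f^N)$, so Franks' realization theorem (\cite{F89}) yields periodic points $p_1,p_2,p_3$ of $f$ with lifts $\hat p_i$ satisfying $\hat f^{N}(\hat p_i)=\hat p_i+u_i$, where $u_1=(1,0)$, $u_2=(0,1)$, $u_3=(-1,-1)$; these satisfy $0\in\inter\conv\{u_1,u_2,u_3\}$, generate $\Z^2$, and in fact $\{a_1u_1+a_2u_2+a_3u_3:a_i\in\Z_{\geq0}\}=\Z^2$. (With $N=1$ and $w=0$ one also gets a fixed point of $\hat f$.) The idea for feeding displacements into $G(W)$ is: given a nonempty open $W'\subseteq W$, use transitivity of $f$ to steer $\pi(W')$ into a small neighborhood of $\bar p_i:=\pi(p_i)$, then ``follow the orbit of $p_i$'' for $k$ of its periods to produce a displacement $k\,u_i$, and then steer back into $W$; chaining these excursions over $i=1,2,3$ returns $W'$ to $W+v$ for some $v=(\text{correction})+k_1u_1+k_2u_2+k_3u_3$ with $k_i\geq1$. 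Because $\bar p_i$ need not be Lyapunov stable, the ``follow the orbit'' step has to be carried out inside the nested open sets $V^{(i)}_k=\bigcap_{j=0}^{k}f^{-jN}\bigl(B(\bar p_i,\epsilon)\bigr)$: each is nonempty and open (it contains $\bar p_i$) even though $\bigcap_kV^{(i)}_k$ may have empty interior, and on $V^{(i)}_k$ the map $\hat f^{kN}$ translates by $k\,u_i$ up to an error $\epsilon$ and in particular returns into $B(\bar p_i,\epsilon)$.

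The step I expect to be the main obstacle is bookkeeping the \emph{deck-transformation component}: each time transitivity of $f$ is invoked to connect two neighborhoods (in particular to re-enter the shadowing set $V^{(i)}_k$, which itself depends on $k$), an integer correction vector appears that we do not control and that may depend on $W'$ and on the $k_i$. The point of working with the monoid $G(W)$ together with a family $\{u_i\}$ whose \emph{nonnegative} cone already exhausts $\Z^2$ is precisely to render these corrections immaterial: one never has to compute them, only to know they are integer vectors, since translating the cone $\{a_1u_1+a_2u_2+a_3u_3:a_i\geq0\}=\Z^2$ by any fixed vector still gives $\Z^2$. What remains is to order the excursions so that the accumulated corrections get absorbed, and to verify that the construction goes through uniformly for every nonempty open $W'\subseteq W$; granting this, $G(W)=\Z^2$, whence \eqref{eq:dagger} and the transitivity of $\hat f$. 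Reconciling the uncontrolled corrections with the uniformity in $W'$ is the delicate heart of the argument; alternatively, one could sidestep the bookkeeping by invoking the transverse-foliation/forcing machinery available when $0\in\inter\rho(\hat f)$, which organizes such concatenations intrinsically.
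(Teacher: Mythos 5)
Your reduction of lifted transitivity of $\hat f$ to a deck-recurrence property (for every open $W\subseteq\R^2$ and every $w\in\Z^2$ some forward iterate of $W$ meets $W+w$) is correct, and is essentially the same reduction the paper performs from theorem~\ref{th:transitivoemcima} to theorem~\ref{th:maintheorem} --- except that the paper restricts $W$ to a small ball $O=B_\varepsilon(\hat x)$ and must verify that $\overline{\pi(O)}$ is inessential and that $\bigcup_n f^n(\pi(O))$ is fully essential. These extra hypotheses are not cosmetic: they are supplied by lemma~\ref{lm:essentialisall}, which rests on the Koropecki--Tal theorem~\ref{th:KoTa} ruling out unbounded periodic disks. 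Your version for arbitrary open $W$ has nothing playing this role, and as stated is exactly the conclusion you are trying to prove, not a weaker intermediate step.

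The deeper problem is the one you flag yourself. The monoid $G(W)$ is a clean framework, but you never actually exhibit a nonzero element of it, and your mechanism cannot do so: to get even $u_i\in G(W)$ you must take an arbitrary nonempty open $W'\subseteq W$ and produce $m$ with $\hat f^m(W')\cap(W+u_i)\neq\emptyset$, whereas the steering step (transitivity of $f$) only gives $\hat f^m(W')\cap(W+u_i+c)\neq\emptyset$ for some uncontrolled $c\in\Z^2$. The observation that the nonnegative cone on $\{u_1,u_2,u_3\}$ is all of $\Z^2$ does not help here: $c$ is not a fixed offset to be absorbed later but changes with $W'$, with the shadowing depth $k$, and with the steering path, and increasing $k$ changes $c$ yet again. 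This circularity is exactly the substance of the theorem, not a bookkeeping nuisance, and the monoid structure does not dissolve it. (You also leave the ``only if'' direction of the corollary unaddressed; the paper gets it from~\cite{Ta1} together with $\rho(T_v\hat f^k)=k\rho(\hat f)+v$.)

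The paper resolves the correction problem by a genuinely different route. Franks' theorem~\ref{th:Franks} appears only in proposition~\ref{pr:fundamentaldomainmeets}, to show that a fundamental domain recurs under $\hat f$ with any prescribed integer displacement; no periodic orbit is shadowed and no displacement is propagated. Control of the unknown integer correction is instead extracted combinatorially: proposition~\ref{pr:cadawtemumtransladado} confines the correction to the finite index set $\Sigma(2M)$, and a double pigeonhole argument (lemma~\ref{lm:pintasubquadrado}) combined with the Brouwer--Brown translation-arc lemmas~\ref{lm:disjunto1vezdisjuntosempre}, \ref{lm:Brouwerline} and with lemma~\ref{lm:inbetween} shows that the sets $I(j,v)=\{w:\hat f^j(T_v O)\cap T_w O\neq\emptyset\}$ contain large squares of lattice points. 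Claims~\ref{cl:mainclaim} and~\ref{cl:pintaquadradoeoposto} then produce two such squares symmetric about the target vector $\ol w$ and derive a contradiction with $\hat f^n(O)\cap T_{\ol w}(O)=\emptyset$ for all $n$. That interpolation machinery is the missing ingredient; without it, or a substitute such as the transverse-foliation forcing theory you allude to, the argument does not close.
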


The key starting point for the proof of theorem \ref{th:transitivoemcima} is a recent result from \cite{KoTa} that rules out the existence of `unbounded' periodic topological disks. After this, the theorem is reduced to a technical result, theorem \ref{th:maintheorem}, the proof of which is obtained from geometric and combinatorial arguments relying on a well-known theorem of Franks about realization of rational rotation vectors by periodic points \cite{F89} and some classic facts from Brouwer theory. The next section states these preliminary facts and introduces the necessary notation. In section 3, theorem \ref{th:maintheorem} is stated, and its proof is presented after using it to prove theorem \ref{th:transitivoemcima}.

\section{Notations and preliminary results}

We denote by $\pi\colon \R^2\to \T^2$ the universal covering of $\T^2=\R^2/\Z^2$.
Given a point $x\in \R^2$, we denote by $(x)_1$ the projection of $x$ onto the first coordinate, by $(x)_2$ its projection onto the second coordinate, and $$\norm{x}_{\infty}=\max\{\abs{(x)_1},\abs{(x)_2}\}.$$
The following definitions will simplify the notation in the proofs ahead. For $v\in\Z^2$ and $L\in\N$, define
$$S(v,L)=\{ w\in\Z^2 : \norm{w-v}_{\infty}\le L\},$$
and
$$\Sigma(M)=\{(i,v)\in \Z\times\Z^2 : \abs{i} \le M, \norm{v}_{\infty}\le M\}.$$
Given $v\in\R^2$, we denote by $T_v:\R^2\to\R^2$ the translation $x\mapsto x+v$. If $\gamma:[0,1]\to\R^2$ is curve, we denote its image by $[\gamma]$. We say that $\gamma$ is a \emph{$T_v$-translation arc} if it joins a point $z=\gamma(0)$ to its image $T_v(z)=\gamma(1)$, and $[\gamma]\cap T_v([\gamma])=\gamma(1)$.

The following lemma is a direct consequence of lemma 3.1 of \cite{Brown}.

\begin{lemma}\label{lm:disjunto1vezdisjuntosempre}
Let $v\in\R^2$ be a nonzero vector and let $K\subset\R^2$ be an arcwise connected set such that $K\cap T_v(K)=\emptyset$. Then $K\cap T^{i}_v(K)=\emptyset$ for all $i\in\Z$ with $i\neq 0$.
\end{lemma}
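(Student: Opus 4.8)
The plan is to reduce to a compact set and then invoke lemma 3.1 of \cite{Brown}. Suppose, for a contradiction, that the conclusion fails. The hypothesis gives $K\cap T_v(K)=\emptyset$, which forces $K\cap T_v^{-1}(K)=\emptyset$ as well (a point $y$ of the latter set would satisfy $T_v(y)\in K\cap T_v(K)$), and the identity $T_v^{-n}\big(K\cap T_v^{n}(K)\big)=T_v^{-n}(K)\cap K$ shows that the set of $n$ for which $K\cap T_v^{n}(K)\neq\emptyset$ is symmetric under $n\mapsto -n$. Hence, if the conclusion fails, there is an integer $n\ge 2$ with $K\cap T_v^{n}(K)\neq\emptyset$.

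The first step is to replace the possibly non-closed set $K$ by a compact one. Fix $x\in K\cap T_v^{n}(K)$; then $T_v^{-n}(x)$ also lies in $K$, so by arcwise connectedness there is an arc $\gamma$ with $[\gamma]\subseteq K$ joining $\gamma(0)=T_v^{-n}(x)$ to $\gamma(1)=x=T_v^{n}(\gamma(0))$. The image $[\gamma]$ is compact and connected, and $[\gamma]\cap T_v([\gamma])\subseteq K\cap T_v(K)=\emptyset$. Disjoint compact sets have positive distance, so put $\epsilon=\operatorname{dist}\big([\gamma],T_v([\gamma])\big)>0$ and let $U$ be the open $\epsilon/3$-neighborhood of $[\gamma]$. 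Then $U$ is open, bounded and connected (being an open neighborhood of a connected compact set), and $U\cap T_v(U)=\emptyset$, since a common point would lie within $2\epsilon/3<\epsilon$ of both $[\gamma]$ and $T_v([\gamma])$. (If the cited form of the lemma requires a topological disk, replace $U$ by a thin tubular neighborhood of $[\gamma]$, which is homeomorphic to an open disk and still disjoint from its $T_v$-image.)

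The second step is to apply lemma 3.1 of \cite{Brown} to the homeomorphism $T_v$ and the set $U$. Here $T_v$ is orientation preserving and, since $v\neq 0$, fixed point free (indeed so is every $T_v^{m}$ with $m\neq 0$). The conclusion of the lemma is that the translates $T_v^{m}(U)$, $m\in\Z$, are pairwise disjoint; in particular $[\gamma]\cap T_v^{n}([\gamma])\subseteq U\cap T_v^{n}(U)=\emptyset$. But $\gamma(1)=x$ belongs to $[\gamma]$ while $x=T_v^{n}(\gamma(0))$ belongs to $T_v^{n}([\gamma])$, so $x\in [\gamma]\cap T_v^{n}([\gamma])$, a contradiction, which proves the lemma.

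I do not expect a genuine obstacle; the only delicate point is matching the precise statement of lemma 3.1 of \cite{Brown}. If that lemma already applies to arbitrary connected sets, one simply feeds in $K$ and skips the reduction above. If instead it is stated for a genuine translation arc, that is, an arc meeting its image only at the common endpoint, then the extra step is to shrink $\gamma$ to a sub-arc that is a $T_v^{n}$-translation arc: among all parameter pairs $(s,t)$ with $\gamma(s)=T_v^{n}(\gamma(t))$ (a nonempty compact set, as it contains $(1,0)$) pick one minimizing $\abs{s-t}$ and restrict $\gamma$ to $[\min(s,t),\max(s,t)]$, after which one applies Brown's conclusion to the homeomorphism $T_v^{n}$.
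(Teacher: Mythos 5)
Your proof is correct and its reduction chain (arcwise connected set $\to$ compact arc $\to$ thickened free open neighborhood $\to$ Brouwer-type disjointness) is a legitimate way to obtain the statement. Note, however, that the paper gives no argument at all here: it simply declares the lemma ``a direct consequence of Lemma~3.1 of~\cite{Brown}.'' That phrasing strongly suggests the cited result already applies to arcs or arcwise connected sets directly, so the reduction to a compact arc is the whole content, and the further thickening to an $\epsilon/3$-neighborhood (and the worry about whether it is a topological disk) is superfluous. Your version is safe in the sense that it would work even if the hypothesis of Brown's Lemma~3.1 required an open disk, but it is more machinery than the paper intends.

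One small gap in the fallback you sketch at the end: minimizing $\abs{s-t}$ over the set of parameter pairs $(s,t)$ with $\gamma(s)=T_v^n(\gamma(t))$ does not by itself produce a $T_v^n$-translation arc. A minimizer $(s_0,t_0)$ rules out pairs strictly inside $[\min(s_0,t_0),\max(s_0,t_0)]$ only if those pairs have strictly smaller gap, which need not hold, so the restricted sub-arc could still meet its $T_v^n$-image at a point other than the common endpoint. This does not affect your main line of argument, which is sound, but if you ever do need a genuine translation arc the cleaner construction is the one the paper uses in Lemma~\ref{lm:inbetween}: take the last exit time from one translate and the first entry time into the next, rather than minimizing the gap.
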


And the next result follows from theorem 4.6 of \cite{Brown}.

\begin{lemma}\label{lm:Brouwerline}
Let $v\in\R^2\sm \{(0,0)\}$ and let $K$ be an arcwise connected subset of $\R^2$ such that $K\cap T_v(K)=\emptyset$, and $\alpha$ a $T_v$-translation arc disjoint from $K$. Then, either
$$K\cap\bigcup_{i\in\N}T^{i}_v[\alpha]=\emptyset \quad \text{ or } \quad K\cap\bigcup_{i\in\N}T^{-i}_v[\alpha]=\emptyset.$$
\end{lemma}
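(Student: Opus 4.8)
The plan is to reduce the statement to a purely Brouwer-theoretic fact about the translation $T_v$ and the trace of $\alpha$, which is essentially the content of Theorem~4.6 of \cite{Brown}. Suppose, for contradiction, that both unions on the right-hand side meet $K$: there are integers $m,n\ge 1$ and points $a\in K\cap T^{m}_v[\alpha]$ and $b\in K\cap T^{-n}_v[\alpha]$. Since $K$ is arcwise connected, choose an arc $\eta\subset K$ from $a$ to $b$; it will be enough to contradict the properties of $\eta$. Observe that $\eta\cap[\alpha]=\emptyset$ because $K\cap[\alpha]=\emptyset$ (so in particular $a,b\notin[\alpha]$), that $\eta\cap T_v(\eta)=\emptyset$ because $\eta\subset K$ and $K\cap T_v(K)=\emptyset$, and hence, by Lemma~\ref{lm:disjunto1vezdisjuntosempre}, that $\eta\cap T^{i}_v(\eta)=\emptyset$ for every $i\ne 0$.

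Now for the geometric setup. The map $T_v$ is an orientation-preserving, fixed-point-free homeomorphism of $\R^2$, so the \emph{trace} $\Lambda:=\bigcup_{i\in\Z}T^{i}_v[\alpha]$ of the translation arc $\alpha$ is, by the Brouwer translation arc lemma (see \cite{Brown}), a properly embedded line: $T^{i}_v[\alpha]\cap T^{j}_v[\alpha]$ is empty when $|i-j|\ge 2$ and reduces to a single common endpoint when $|i-j|=1$, and the arcs $T^{i}_v[\alpha]$ leave every compact set as $|i|\to\infty$ because $v\ne(0,0)$. Consequently $\R^2\sm\Lambda$ has exactly two connected components $U^{+}$ and $U^{-}$, and since $T_v$ acts on $\Lambda\cong\R$ as an orientation-preserving homeomorphism (it shifts the natural parametrization of $\Lambda$ by one step), it preserves $\Lambda$ and each of $U^{+}$, $U^{-}$. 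Let $\alpha^{+}:=\bigcup_{i\ge 0}T^{i}_v[\alpha]$ and $\alpha^{-}:=\bigcup_{i\le 0}T^{i}_v[\alpha]$ be the two sub-rays of $\Lambda$, so that $\alpha^{+}\cup\alpha^{-}=\Lambda$ and $\alpha^{+}\cap\alpha^{-}=[\alpha]$. In this language, the arc $\eta$ joins the point $a\in\alpha^{+}\sm[\alpha]$ to the point $b\in\alpha^{-}\sm[\alpha]$ while remaining disjoint from $[\alpha]$ and from all of its own nontrivial $T_v$-translates.

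It remains to see that such an arc cannot exist; this is precisely what Theorem~4.6 of \cite{Brown} gives, applied to the Brouwer homeomorphism $T_v$, so formally the proof concludes here. Were one to reprove it by hand, the idea would be to follow the successive crossings of $\eta$ with the line $\Lambda$ and to show that, after translating $\eta$ by a suitable power of $T_v$ and concatenating it with a sub-arc of $\Lambda$ passing through $[\alpha]$, one obtains a loop that is forced to meet $[\alpha]$ — contradicting $\eta\cap[\alpha]=\emptyset$ together with the pairwise disjointness of the translates of $\eta$ furnished by Lemma~\ref{lm:disjunto1vezdisjuntosempre}.

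The step I expect to be the main obstacle is exactly this last one: controlling the possibly many crossings of $\eta$ with $\Lambda$ that occur \emph{away} from the portion of $\Lambda$ joining $a$ to $b$, i.e.\ ruling out that $\eta$ weaves repeatedly back and forth across the trace. A naive parity argument via the Jordan curve theorem does not suffice here, since $\Lambda$ is unbounded and spread throughout the plane, and this is the reason for appealing to the Brouwer-theoretic analysis of \cite{Brown} rather than arguing directly.
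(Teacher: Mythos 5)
Your proof follows the same route as the paper: the statement is taken to be a direct consequence of Theorem~4.6 of \cite{Brown}, and after some setup you likewise defer to that result for the core content. The preliminary step of replacing $K$ by an arc $\eta\subset K$ joining $a\in T^m_v[\alpha]$ to $b\in T^{-n}_v[\alpha]$, and the verification via Lemma~\ref{lm:disjunto1vezdisjuntosempre} that $\eta$ is free for $T_v$, are harmless refinements of the hypotheses under which Brown's theorem is invoked, but the substance of the argument is the same citation the paper uses.
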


We will also use the next theorem from \cite{F89}.
\begin{theorem}[Franks] \label{th:Franks} Let $\hat{f}$ be a lift to $\R^2$ of a homeomorphism of $\T^2$ homotopic to the identity. If $w\in \Z^2$ and $q\in \N$ are such that $w/q$ lies in the interior of $\rho(\hat{f})$, then there is $\hat{x}\in \R^2$ such that $\hat f^q(\hat x)=\hat x+w.$
\end{theorem}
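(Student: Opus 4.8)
\textbf{Proof plan for Theorem~\ref{th:Franks} (Franks).}
The strategy is to reduce the statement to Franks' fixed point theorem for maps of the annulus (or for maps of the torus possessing suitable "boundary" behavior forced by the rotation set), by passing to a rational power and composing with a translation. Set $g=T_{-w}\circ\hat f^{\,q}$, so that finding $\hat x$ with $\hat f^{\,q}(\hat x)=\hat x+w$ is exactly finding a fixed point of $g$. Note that $g$ is a lift of $f^q$ (a torus homeomorphism homotopic to the identity), and that $\rho(g)=\rho(\hat f^{\,q})-w\supset q\,\rho(\hat f)-w$; the hypothesis that $w/q$ lies in the interior of $\rho(\hat f)$ translates precisely into the statement that the origin lies in the interior of $\rho(g)$. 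So the theorem is equivalent to: \emph{if $g$ is a lift of a torus homeomorphism homotopic to the identity and $0\in\inter\rho(g)$, then $g$ has a fixed point in $\R^2$.}

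The main step is to produce the fixed point from the convexity of $\rho(g)$ together with an index/degree argument. Since $0$ is interior to the convex set $\rho(g)$, one can choose finitely many rotation vectors $\rho_1,\dots,\rho_k\in\rho(g)$ whose convex hull contains $0$ in its interior (three suffice in the plane). Each $\rho_j$ is realized, in the Misiurewicz--Ziemian sense, by points whose orbit averages approach $\rho_j$; the idea is to use these to control the displacement function $\hat x\mapsto g(\hat x)-\hat x$ on a large region. Concretely, I would argue that the displacement vector field $D(\hat x)=g(\hat x)-\hat x$ is $\Z^2$-periodic, and that if $g$ had no fixed point then $D$ would be a nonvanishing $\Z^2$-periodic vector field on $\R^2$, hence descend to a nonvanishing vector field on $\T^2$, which therefore has index $0$ on any disk. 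On the other hand, the presence of rotation vectors surrounding the origin should force, via a Brouwer-type translation/plane argument, that the displacement "winds around" $0$ along the boundary of a suitably chosen large topological disk, yielding nonzero index and a contradiction. Filling in this winding argument is where the Brouwer theory of the previous section (translation arcs, free disks, Lemmas~\ref{lm:disjunto1vezdisjuntosempre} and~\ref{lm:Brouwerline}) enters: one shows that if $D$ never vanishes then $g$ would be "fixed-point free" and hence, after possibly passing to an iterate, would behave like a translation on large scales in a single asymptotic direction, which is incompatible with having rotation vectors pointing in a spanning set of directions.

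The cleanest route, and the one I would actually write up, is to invoke the Brouwer plane translation theorem directly: if $g$ is fixed point free then every point lies on a Brouwer line, and one extracts that all orbits under $g$ drift to infinity in a coherent way, forcing $\rho(g)$ to lie in a closed half-plane through the origin (an observation going back to the link between fixed points and the structure of the rotation set). This contradicts $0\in\inter\rho(g)$. Thus $g$ has a fixed point $\hat x$, i.e. $\hat f^{\,q}(\hat x)=\hat x+w$, as claimed. The delicate part — the main obstacle — is making precise the implication "$g$ fixed point free $\Rightarrow$ $\rho(g)$ contained in a half-plane bounded by a line through $0$"; this requires care because a priori fixed-point-freeness only gives local translation-like behavior, and one must globalize it using periodicity of $g$ together with the compactness of $\T^2$ and the Brouwer line structure, controlling how orbits can accumulate. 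Everything else (the reduction via $g=T_{-w}\circ\hat f^{\,q}$, the computation $0\in\inter\rho(g)\iff w/q\in\inter\rho(\hat f)$, and the final unwinding) is routine.
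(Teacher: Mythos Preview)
The paper does not prove Theorem~\ref{th:Franks}; it is quoted from Franks~\cite{F89} as a preliminary result and used as a black box, so there is no argument in the paper to compare your sketch against.

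As for the sketch itself: the reduction to $g=T_{-w}\circ\hat f^{\,q}$ and the equivalence $0\in\inter\rho(g)\iff w/q\in\inter\rho(\hat f)$ are correct and routine. The difficulty is that what you flag as the ``main obstacle'' --- that a fixed-point-free lift $g$ must have $\rho(g)$ contained in a closed half-plane through the origin --- is, by convexity of $\rho(g)$, literally the contrapositive of the statement to be proved, so no actual progress has been made. The Brouwer plane translation theorem does give that every point is wandering and that Brouwer lines exist, but turning this into a constraint on the \emph{rotation set} (long-term orbit averages) is exactly the substance of Franks' paper; your index heuristic does not bridge the gap, since $D(x)=g(x)-x$ avoiding $0$ places no direct restriction on the averages $\tfrac{1}{n}(g^n(x)-x)$. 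Note also that Lemmas~\ref{lm:disjunto1vezdisjuntosempre} and~\ref{lm:Brouwerline} of this paper are about \emph{translations} $T_v$, not about a general lift $g$, so they are not the tools that would fill this in. Franks' own proof in~\cite{F89} proceeds via chain recurrence and a Poincar\'e--Birkhoff/index-type argument rather than Brouwer lines in the way you suggest.
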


Let us recall some terminology used in \cite{KoTa}. An open set $U\subset \T^2$ is \emph{inessential} if every loop contained in $U$ is homotopically trivial in $\T^2$. An arbitrary subset of $\T^2$ is called inessential if it has an inessential neighborhood, and \emph{essential} otherwise. If $A$ is an inessential subset of $\T^2$, and $U$ is an open neighborhood of $\T^2\setminus A$, then $\T^2\setminus U$ is a closed inessential set. This implies that every homotopy class of loops of $\T^2$ is represented by some loop contained in $U$, and for this reason any subset of $\T^2$ with an inessential complement is called a \emph{fully essential} set.

If $f\colon \T^2\to\T^2$ is a homeomorphism, we say that a point $x\in \T^2$ is an \emph{inessential point} for $f$ if there exists $\epsilon>0$ such that the set $\bigcup_{i\in\Z} f^{i}(B_{\epsilon}(x))$ is inessential. The set of all inessential points of $f$ is denoted by $\ine(f)$. Any point in the set $\ess(f)=\T^2\sm \ine(f)$ is called an \emph{essential point} for $f$.  It follows from the definitions that $\ine(f)$ is open and invariant, while $\ess(f)$ is closed and invariant.

Given an open connected set $U \subset \T^2$, we denote by $\mathcal{D}(U)$ the diameter in $\R^2$ of any connected component of $\pi^{-1}(U)$. If $\mathcal{D}(U) < \infty$, then we say that $U$ is \emph{bounded}.

Suppose that $f\colon \R^2\to \R^2$ is a homeomorphism homotopic to the identity. We need the following
\begin{theorem}[\cite{KoTa}, Theorem C]\label{th:KoTa}
If $f$ is nonwandering and has (a lift with) a rotation set with nonempty interior, then $\ine(f)$ is a disjoint union of periodic simply connected sets which are bounded.
\end{theorem}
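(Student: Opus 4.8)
The plan is to examine the connected components of the open, $f$-invariant set $\ine(f)$ and establish, one component at a time, that it is periodic, simply connected, and bounded. A first, preparatory step is to rule out the degenerate situations: if $\ess(f)$ were not fully essential, then by the structure theory of nonwandering homeomorphisms of $\T^2$ homotopic to the identity one would have either $\ine(f)=\T^2$ or a periodic essential sub-annulus for $f$, and in both cases $\rho(\hat f)$ has empty interior, contrary to $(0,0)\in\inter\rho(\hat f)$. So I would take for granted (it is part of the same circle of results) that $\ess(f)$ is fully essential and connected; in particular every component $U$ of $\ine(f)$ is inessential, so each component $\til U$ of $\pi^{-1}(U)$ is mapped homeomorphically onto $U$ by $\pi$ and is disjoint from $T_v(\til U)$ for all $v\in\Z^2\sm\{(0,0)\}$.

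The periodicity of the components is the easy part: $f$ permutes the components of the open invariant set $\ine(f)$, and for a given component $U$ the nonwandering hypothesis applied to the open set $U$ gives some $k\ge1$ with $f^k(U)\cap U\neq\emptyset$; since $f^k(U)$ is again a component of $\ine(f)$ this forces $f^k(U)=U$. Writing $n$ for the period of $U$, the rotation set of $\hat f^n$ equals $n\,\rho(\hat f)$ and still has $(0,0)$ in its interior.

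The heart of the matter — and the step I expect to be the main obstacle — is boundedness. Two ingredients go into it. First, a periodic point of $f$ with nonzero rotation vector cannot lie in $\ine(f)$: if $p\in\ine(f)$ has a lift $\hat p$ with $\hat f^q(\hat p)=\hat p+w$, $w\neq0$, then the orbit neighbourhood $\bigcup_{i\in\Z}f^i(B_\epsilon(p))$ is inessential for small $\epsilon$, yet $\hat p$ and $\hat p+w=\hat f^q(\hat p)$ lie in the same connected component of its $\pi$-preimage, contradicting injectivity of $\pi$ on such components. Second, since $(0,0)\in\inter\rho(\hat f)$, Franks' theorem (Theorem~\ref{th:Franks}) produces a two-parameter family of periodic points with nonzero rotation vectors, which by the first ingredient all lie in $\ess(f)$, hence outside every component $\til U$ of $\pi^{-1}(U)$. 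Now suppose some such $\til U$ is unbounded, and let $h$ be the lift of $f^n$ with $h(\til U)=\til U$. The sets $\til U$ and its nonzero $\Z^2$-translates are pairwise disjoint, while $h$-orbits of the essential periodic points escape to infinity in many directions, all missing $\til U$; interpreting these orbits as translation arcs and invoking Lemmas~\ref{lm:disjunto1vezdisjuntosempre} and~\ref{lm:Brouwerline} to control how they and their iterates must link or surround the unbounded connected set $\til U$, one is forced to conclude that $\til U$ meets one of its own translates — a contradiction. Making this geometric and combinatorial forcing precise (selecting the arcs, keeping track of which translates and which iterates are involved, and exploiting unboundedness) is exactly the delicate argument alluded to in the introduction.

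Finally, once $U$ is bounded, its simple connectedness follows from a filling argument using that $\ess(f)$ is connected: since $\ess(f)$ is connected and disjoint from $U$, it lies in a single component of $\T^2\sm U$; any other component $D$ of $\T^2\sm U$ would be closed with $\partial D\subset\partial U\subset\ess(f)$ and simultaneously $\partial D\subset D\subset\T^2\sm\ess(f)$, which is absurd, so $\T^2\sm U$ is connected, and together with the inessentiality of $U$ this forces $U$ to be simply connected. Combining the four steps shows $\ine(f)$ is a disjoint union of periodic, simply connected, bounded open sets.
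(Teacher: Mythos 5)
This theorem is imported verbatim from \cite{KoTa} (Theorem~C of \emph{Strictly toral dynamics}); the paper under review cites it and does not prove it, so there is no internal proof against which your sketch can be compared. The authors explicitly treat it as a black box: in the introduction they describe it as ``a recent result from \cite{KoTa} that rules out the existence of `unbounded' periodic topological disks.''

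On the merits of your sketch, the periodicity argument (nonwandering forces each component of the open invariant set $\ine(f)$ to be periodic) is fine, and the filling argument for simple connectedness once boundedness is known is plausible. But the boundedness step, which you rightly identify as the heart of the matter, is left at the level of ``making this geometric and combinatorial forcing precise is exactly the delicate argument,'' and that is precisely the content of the cited theorem. Worse, your ``first ingredient'' --- that a periodic point with nonzero rotation vector cannot lie in $\ine(f)$ --- is not established by the argument you give, and as a standalone claim it is false. The assertion that $\hat p$ and $\hat p + w = \hat f^q(\hat p)$ lie in the same connected component of $\pi^{-1}\bigl(\bigcup_{i\in\Z} f^i(B_\epsilon(p))\bigr)$ has no justification: the orbit neighbourhood need not be connected, and even when it is, $\hat f^q$ may carry the component of $\hat p$ to a different component of the preimage. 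A concrete counterexample to the general claim is a rigid translation of $\T^2$ by $(1/2,0)$: every point is periodic with nonzero rotation vector, yet $\ine(f)=\T^2$ (every orbit neighbourhood is a finite union of tiny disks). Of course this example violates the interior hypothesis on $\rho(\hat f)$, but your argument nowhere invokes that hypothesis, so it proves nothing; and trying to invoke it would reintroduce the boundedness you are trying to establish, making the argument circular. The actual proof in \cite{KoTa} is substantially longer and relies on equivariant Brouwer theory and foliation-type arguments that go well beyond Lemmas~\ref{lm:disjunto1vezdisjuntosempre}, \ref{lm:Brouwerline} and Franks' theorem.
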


In particular, when $f$ is transitive one may conclude $\ine(f)$ is empty:

\begin{lemma}\label{lm:essentialisall}
Suppose that $f$ is transitive and has (a lift with) a rotation set with nonempty interior. Then $\ess(f)=\T^2$.
\end{lemma}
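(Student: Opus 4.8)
The plan is to combine Theorem~\ref{th:KoTa} with transitivity and with Franks' realization theorem. Since a transitive homeomorphism is nonwandering and $\hat f$ has a rotation set with nonempty interior, Theorem~\ref{th:KoTa} applies and gives that $\ine(f)$ is a disjoint union of periodic, simply connected, bounded open sets. What has to be shown is that this union is empty, so I would argue by contradiction and assume that $\ine(f)$ has a connected component $U$.

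The first reductions are soft. Since $\ine(f)$ is open and $f$-invariant, $f$ permutes its connected components, so by transitivity there is $n\ge 1$ with $f^{n}(U)\cap U\neq\emptyset$, and as $f^{n}(U)$ is again a component this forces $f^{n}(U)=U$; take $k\ge 1$ minimal with this property, so that $U,f(U),\dots,f^{k-1}(U)$ are pairwise disjoint, and put $O=\bigcup_{j=0}^{k-1}f^{j}(U)$. Then $O$ is open, nonempty and $f$-invariant. Each $f^{j}(U)$ is simply connected, hence inessential, and a finite disjoint union of inessential open sets is inessential (a loop is connected, hence lies in a single piece), so $O$ is inessential; in particular $O\neq\T^2$, because $\T^2$ is connected but not simply connected. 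A nonempty proper open invariant set of a transitive homeomorphism is dense, so $O$ is dense; this forces $\ine(f)=O$ (a further component of $\ine(f)$ would be a nonempty open subset of the nowhere dense set $\T^2\setminus O$), and hence $\ess(f)=\T^2\setminus O$ is a nonempty, closed, $f$-invariant, nowhere dense, fully essential set, equal to $\partial O=\bigcup_{j}\partial(f^{j}(U))$.

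Next I would bring in the rotation set. Lift $U$ to a connected component $\til U$ of $\pi^{-1}(U)$; since $U$ is simply connected, the components of $\pi^{-1}(U)$ are exactly the integer translates of $\til U$, pairwise disjoint and all of the same finite diameter. Because $f^{k}(U)=U$ and $\hat f^{k}$ commutes with integer translations, $\hat f^{k}$ translates every such component by a fixed vector $w\in\Z^2$, so $\hat f^{km}(\til U)=\til U+mw$; applying the same to the finitely many translated pieces $\hat f^{j}(\til U)$ shows that every connected component of $\pi^{-1}(O)$ is translated by $w$ under $\hat f^{k}$. Using that all these pieces have uniformly bounded diameter, one obtains $\norm{\hat f^{n}(\hat z)-\hat z-(n/k)w}\le\mathrm{const}$ for every $\hat z\in\pi^{-1}(O)$, so every point of $\pi^{-1}(O)$ has rotation vector $w/k$ under $\hat f$; in particular the dense orbit provided by transitivity, which must lie in $O$, has rotation vector $w/k$. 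On the other hand, since $\rho(\hat f)$ has nonempty interior it contains a rational vector $p/q\neq w/k$, and by Theorem~\ref{th:Franks} this vector is realized by a periodic point of $f$; that point cannot lie in $\ine(f)=O$, so it lies on $\ess(f)=\partial O$, i.e.\ on the boundary of one of the disks $f^{j}(U)$.

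The remaining step — and the one I expect to be the main obstacle — is to rule this last situation out: one must follow the periodic orbit of this boundary point against the rigid translation‑by‑$w$ behaviour of the lifted disks, using the disjointness statements of Lemmas~\ref{lm:disjunto1vezdisjuntosempre} and~\ref{lm:Brouwerline} together with the classical Brouwer‑theoretic facts about translation arcs, in order to conclude that the displacement of this periodic point is itself forced to be $w/k$, contradicting $p/q\neq w/k$. Everything up to that point is routine; the geometric and combinatorial bookkeeping needed to close the argument at the boundary of the inessential disks is where the real work lies.
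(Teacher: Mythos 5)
Your argument is left with an explicit gap, which you acknowledge yourself in the last paragraph: you do not actually show that the Franks periodic point sitting on $\partial O$ has rotation vector $w/k$. So as written this is not a complete proof. That said, the missing step is considerably easier than you anticipate and needs none of the Brouwer machinery you invoke. Since $\hat f^{k}(\til U)=\til U+w$ and $\cl(\til U)$ is compact, applying $\hat f^k$ (a homeomorphism commuting with translations) to closures gives $\hat f^{k}(\cl(\til U))=\cl(\til U)+w$, hence $\hat f^{k}(\partial\til U)=\partial\til U+w$; so any lift sitting on $\partial\til U$ (and, by the same reasoning on each piece $\hat f^{j}(\til U)$, any lift of a point of $\partial O$) also drifts by exactly $w$ under $\hat f^{k}$, up to the bounded diameter of $\cl(\til U)$. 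This already gives rotation vector $w/k$ on all of $\cl(O)=\T^2$, hence $\rho(\hat f)=\{w/k\}$ — and at that point the entire detour through Franks' theorem and the periodic point on $\partial O$ is superfluous: singletons have empty interior, contradiction.

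The paper's own proof is different in emphasis and cleaner still, and in particular never introduces Franks' theorem. It picks the component $D$ containing a point with dense orbit, lifts it, and works with the auxiliary map $\hat g=T_v^{-1}\hat f^{k}$ that fixes the lifted component. The bounded set $\hat U=\bigcup_{i=0}^{k-1}\hat f^{i}(\hat D)$ is $\hat g$-invariant; its closure is compact and $\hat g$-invariant, and because $\pi(\hat U)$ contains a dense orbit, $\pi(\cl(\hat U))=\T^2$. So $\R^2=\bigcup_{u\in\Z^2}T_u(\cl(\hat U))$, each translate being a compact $\hat g$-invariant set, and thus \emph{every} point of $\R^2$ has a bounded $\hat g$-orbit. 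Hence $\rho(\hat g)=\{(0,0)\}$, while $\rho(\hat g)=k\rho(\hat f)-v$ should have nonempty interior — contradiction. This route avoids tracking boundary points altogether, which is precisely where you stalled.
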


\proof
Suppose for contradiction that $\ine(f)$ is nonempty. Then there is some point $x\in \ine(f)$ with a dense orbit. Let $D$ be the connected component of $\ine(f)$ that contains $x$. By lemma \ref{th:KoTa}, we have that $D$ is a periodic simply connected open and bounded set. Thus any connected component $\hat{D}$ of $\pi^{-1}(D)$ is bounded. 

Let $k$ be such that $f^k(D)=D$ and let $\hat{f}$ be a lift of $f$. Then $\hat{f}^k(\hat{D})=\hat{D}+v$ for some $v\in \Z^2$, and so if $\hat{g}=T_v^{-1}\hat{f}^k$ we have that $\hat{g}(\hat{D})=\hat{D}$. The set $\hat{U}=\bigcup_{i=0}^{k-1}\hat{f}^i(\hat{D})$ is then bounded $\hat{g}$-invariant (because $\hat{g}$ commutes with $\hat{f}$), and therefore $\cl(\hat{U})$ is also bounded and $\hat{g}$-invariant. 
In particular all points in $\cl(\hat{U})$ have a bounded $\hat{g}$-orbit. Since $U=\pi(\hat{U})$ is the $f$-orbit of $D$, which contains the (dense) orbit of $x$, it follows that $\cl(U)=\T^2$. The boundedness of $\hat{U}$ then implies that $\pi(\cl(\hat{U})) = \cl(U)=\T^2$. 

It follows from these facts that every point of $\R^2$ has a bounded $g$-orbit, from which we conclude that $\rho(\hat{g})=\{(0,0)\}$. But it follows from the definition of rotation set that $\rho(\hat{g})=\rho(T_v^{-1}\hat{f}^k) = k\rho(\hat{f})-v$ (see \cite{MZ89}), contradicting the fact that $\rho(\hat{f})$ has nonempty interior.
\endproof

\section{Proof of Theorem \ref{th:transitivoemcima}}

Theorem \ref{th:transitivoemcima} is a consequence of the following technical result.
\begin{theorem}\label{th:maintheorem}
Let $f\colon \T^2\to \T^2$ be a homeomorphism homotopic to the identity, and $\hat f$ a lift of $f$ such that $(0,0)$ belongs to the interior of $\rho(\hat f).$
Let $O\subset \R^2$ be an open connected set such that $\ol{\pi(O)}$ is inessential and $\bigcup_{n\in\Z} f^n(\pi(O))$ is fully essential. Then, for every $w\in\Z^2$ there exists $n\in \N$ such that $\hat f^n(O)\cap T_w(O)\neq \emptyset$.
\end{theorem}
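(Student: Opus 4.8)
The plan is to argue by contradiction. Fix $w\in\Z^2$. If $w=(0,0)$ there is nothing to prove, so assume $w\ne(0,0)$ and suppose that $\hat f^n(O)\cap T_w(O)=\emptyset$ for every $n\in\N$; let $\mc B=\{v\in\Z^2 : \hat f^n(O)\cap T_v(O)=\emptyset\text{ for all }n\ge 0\}$, so $w\in\mc B$. Two elementary observations start the argument. First, since $O\ne\emptyset$ we have $(0,0)\notin\mc B$, and since $\ol{\pi(O)}$ is inessential, $\pi(O)$ contains no essential loop; consequently $O$ is disjoint from each of its nonzero integer translates, in particular from $T_w(O)$. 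Second, for each $n$ the set $\hat f^n(O)$ is arcwise connected and $\hat f^n(O)\cap T_w(\hat f^n(O))=\hat f^n(O\cap T_w(O))=\emptyset$, so by Lemma \ref{lm:disjunto1vezdisjuntosempre} we get $\hat f^n(O)\cap T_{jw}(\hat f^n(O))=\emptyset$ for all $n\ge 0$ and all $j\ne 0$: every iterate of $O$ is \emph{free in the direction $w$}.

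Next I would use the full essentiality hypothesis to manufacture a ``chain'' realizing the class of $w$. Since $\T^2\setminus\bigcup_{n\in\Z}f^n(\pi(O))$ is inessential, every homotopy class of loops of $\T^2$ is represented by a loop contained in $\bigcup_{n}f^n(\pi(O))$; fix such a loop $\gamma$ in the class of $w$. Covering the compact set $[\gamma]$ by finitely many of the open sets $f^n(\pi(O))$ and subdividing with a Lebesgue number, write $\gamma=\gamma_1\cdots\gamma_N$ with $[\gamma_i]\subset f^{n_i}(\pi(O))$; lifting to $\R^2$, each subarc lies in a single connected component of $\pi^{-1}(f^{n_i}(\pi(O)))$, that is, in a set $\hat f^{n_i}(O)+v_i$ with $v_i\in\Z^2$ (here one uses that $f$, being homotopic to the identity, sends the inessential set $\pi(O)$ to the inessential set $f^{n_i}(\pi(O))$, whose preimage is the disjoint union $\bigcup_{v\in\Z^2}(\hat f^{n_i}(O)+v)$). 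The lifted path is connected with endpoints differing by $w$, so the connected set $\mc C=\bigcup_{i=1}^N(\hat f^{n_i}(O)+v_i)$ satisfies $\mc C\cap(\mc C+w)\ne\emptyset$, and with $M=\max_i(|n_i|,\nrm{v_i}_\infty)$ it is contained in $\pi^{-1}(\bigcup_{|n|\le M}f^n(\pi(O)))$, so all the relevant iterate/translation data lives inside $\Sigma(M)$. Comparing overlapping links of the chain with the standing hypothesis via Lemma \ref{lm:disjunto1vezdisjuntosempre} further constrains the $v_i$: an overlap $(\hat f^{n_i}(O)+v_i)\cap(\hat f^{n_{i+1}}(O)+v_{i+1})\ne\emptyset$ forces $v_{i+1}-v_i$ — or $v_i-v_{i+1}$, according to the sign of $n_i-n_{i+1}$ — to lie outside $\mc B$, hence to differ from $w$.

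Finally I would bring in Franks' Theorem \ref{th:Franks}. Since $(0,0)\in\inter\rho(\hat f)$, we have $w/q\in\inter\rho(\hat f)$ for every sufficiently large $q$, so there is $\hat z$ with $\hat f^q(\hat z)=\hat z+w$, and its $\hat f^q$-orbit $\{\hat z+jw\}_{j\in\Z}$ runs off to infinity in both $w$-directions. Joining consecutive points of this orbit by free arcs — a standard construction in Brouwer theory — produces a $T_w$-translation arc $\alpha$, and after iterating $\hat z$ far forward, using the freeness of the iterates of $O$ in the $w$-direction, one arranges $\alpha$ to be disjoint from a fixed high iterate $K=\hat f^{N_0}(O)$ while $K\cap T_w(K)=\emptyset$. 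Lemma \ref{lm:Brouwerline} then forces $K$ to avoid one of the two $T_w$-rays of the $T_w$-invariant, properly embedded line $L=\bigcup_{i\in\Z}T_w^i[\alpha]$. Propagating this one-sidedness along the orbit of $O$ and across the finitely many translates recorded by $\Sigma(M)$ — a pigeonhole argument using that $L$ separates $\R^2$ and that each orbit piece is connected — should assemble an $\hat f$-invariant essential properly embedded line, across which $\hat f$ can cross only one way; such an invariant essential Brouwer line in the $w$-direction is incompatible with $(0,0)$ lying in the interior of $\rho(\hat f)$ (or, directly, with the existence of the chain $\mc C$, which realizes the class of $w$), which is the contradiction sought.

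I expect this last step to be the main obstacle. Everything through Lemma \ref{lm:Brouwerline} is essentially bookkeeping around the two Brouwer-theoretic lemmas and the fully essential hypothesis; the delicate point is to choose a single translation arc $\alpha$ — equivalently, a Brouwer line $L$ — that is simultaneously disjoint from the connected orbit piece on which Lemma \ref{lm:Brouwerline} is invoked, placed in the correct homotopy position relative to the chain $\mc C$ so that the one-sidedness conclusion genuinely produces a contradiction, and anchored on the Franks periodic point. Coordinating these three requirements, for which the quantitative control afforded by the sets $S(v,L)$ and $\Sigma(M)$ is tailor-made, is where the real work lies.
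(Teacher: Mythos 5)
Your opening moves are sound and do align with the paper: arguing by contradiction, noting that the $\hat f^n(O)$ are free in the $w$-direction (Lemma~\ref{lm:disjunto1vezdisjuntosempre}), building a ``chain'' from the fully essential hypothesis (this is essentially the paper's Proposition~\ref{pr:cercadominiofundamental}), and using Franks' theorem to place $w/q$ in $\operatorname{int}\rho(\hat f)$ and produce a periodic lift point. But you have correctly flagged where the real difficulty is, and at that point your plan diverges from the paper in a way that I do not think can be repaired as stated.

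Your idea is to produce a single $T_w$-translation arc $\alpha$, apply Lemma~\ref{lm:Brouwerline} to conclude that a high iterate $K=\hat f^{N_0}(O)$ avoids one of the two $T_w$-rays, and then ``propagate this one-sidedness'' into an $\hat f$-invariant essential Brouwer line in the $w$-direction. The obstacle is that a one-dimensional separating object built from a single translation direction does not force the needed contradiction: Lemma~\ref{lm:Brouwerline} only tells you that a given connected set misses one of the two rays, and nothing in your setup forces $\hat f^n(O)$ to need both sides of a single $T_w$-invariant line. The paper resolves this by working genuinely two-dimensionally. It first proves Proposition~\ref{pr:intersquadrantes}: for large $n$, $\hat f^n(O)$ must intersect all four ``far quadrants'' $U_1,\dots,U_4$. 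Then in Claim~\ref{cl:mainclaim} it constructs translation arcs in \emph{both} coordinate directions $(1,0)$ and $(0,1)$ inside $T_v(O)\cup \hat f^k(T_{\ol w}(O))$ (this requires Lemma~\ref{lm:inbetween}, a non-trivial geometric lemma you have no analogue of), applies Lemma~\ref{lm:Brouwerline} to each, and concludes that $\hat f^{n_i}(O)$ must avoid one of four $\Gamma$-shaped connected sets $F_j$. Each $F_j$ separates two of the four far quadrants, contradicting Proposition~\ref{pr:intersquadrantes}. This crossed, two-axis argument is the crux, and a $w$-direction-only Brouwer line misses it.

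A second missing ingredient is the combinatorial/pigeonhole machinery. The paper encodes intersection data in the index sets $I(j,v)=\{w:\hat f^j(T_v(O))\cap T_w(O)\neq\emptyset\}$ and proves ``filling'' results (Proposition~\ref{pr:pintasegmento} and Lemma~\ref{lm:pintasubquadrado}) showing that these sets contain large lattice squares. This is what makes it possible to find, via Claim~\ref{cl:pintaquadradoeoposto}, a lattice point $\ol v$ and its reflection $R_{\ol w}(\ol v)=2\ol w-\ol v$ both realized with a $1$-neighbourhood in some $I(k_i,\ol w)$; combining with Claim~\ref{cl:mainclaim} gives both $\ol v$ and $2\ol w-\ol v$ in $I(n,(0,0))$ for large $n$, and a final application of Lemma~\ref{lm:inbetween} with $v=\ol w-\ol v$, $K_1=T_{\ol w}(O)$, $K_2=\hat f^n(O)$ produces the contradiction. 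Your ``quantitative control afforded by $S(v,L)$ and $\Sigma(M)$'' intuition is pointing in the right direction, but the actual mechanism — Lemma~\ref{lm:inbetween}, the four-quadrant proposition, the two-axis translation arcs, and the reflection trick — is where the proof lives, and none of these appear in your sketch. In short: your setup is compatible with the paper's, but the step you identified as ``the main obstacle'' really is the theorem, and your proposed route through an invariant Brouwer line in the $w$-direction would need to be replaced by the two-dimensional four-quadrant/translation-arc argument.
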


\begin{proof}[Proof of theorem \ref{th:transitivoemcima} assuming theorem \ref{th:maintheorem}]
Let $O_1, O_2$ be two open sets of the plane. Since $f$ is transitive, there exists $n_0$ such that $f^{n_0}(\pi(O_1))$ intersects $\pi(O_2),$ which implies that there exists $w\in\Z^2$ such that $\hat f^{n_0}(O_1)\cap T_w(O_2)\not=\emptyset.$ Let $\hat x\in\R^2$ and $0<\varepsilon<\frac{1}{2}$ be such that $B_{\varepsilon}(\hat x)\subset  \hat f^{n_0}(O_1)\cap T_w(O_2)$. Note that, as $\varepsilon<\frac{1}{2}$, the set $\pi(B_{\varepsilon}(\hat x))$ is inessential.

Since $f$ is transitive and $\rho(\hat f)$ has interior, lemma \ref{lm:essentialisall} implies that $\ess(f)=\T^2$ and, in particular, 
$ U_{\varepsilon}(x)=\bigcup_{i\in\Z} f^{i}(B_{\varepsilon}(x))$ is an essential invariant open set and, since $f$ is nonwandering, the connected component of $\hat x$ in $U_{\varepsilon}(x)$ is periodic. In $\T^2$, an essential connected  periodic open set is either fully essential or contained in a periodic set homeomorphic to an essential annulus, in which case the rotation set is contained in a line segment. Since $\rho(\hat f)$ has nonempty interior, we conclude that $U_{\varepsilon}(x)$ is fully essential.

Therefore $B_{\varepsilon}(\hat x)$ satisfies that  the hypotheses of theorem \ref{th:maintheorem}. Thus there exists $n_1$ such that $\hat f^{n_1}(B_{\varepsilon}(\hat x))$ intersects $T_{-w}(B_{\varepsilon}(\hat x)).$ But this implies that $\hat f^{n_0+n_1}(O_1)$ intersects $T_{-w}(T_{w}(O_2))=O_2,$ completing the proof.
\end{proof}

\subsection{Proof of theorem \ref{th:maintheorem}}
 
We begin by fixing both $\overline{w}\in\Z^2$ and an open connected set $O\subset\R^2$ such that $\bigcup_{n\in\Z} f^n(\pi(O))$ is fully essential and $\ol{\pi(O)}$ is inessential. Our aim is to show that $\hat{f}^n(O)\cap T_{\ol{w}}(O)\neq \emptyset$ for some $n\in \N$. 

Since we are assuming that $\rho(\hat{f})$ contains $(0,0)$ in its interior, we may choose $\delta>0$ (which is fixed from now on) such that $B_{\delta}((0,0))\subset \rho(\hat f)$.

\begin{proposition}\label{pr:cercadominiofundamental}
There exists a positive integer $M$ and a compact set $K$ such that $[0,1]^2$ is contained in a bounded connected component of $\R^2\sm K$ and $$K \subset\bigcup_{(i,v)\in\Sigma(M)}
T_v(\hat f^{i}(O))$$
\end{proposition}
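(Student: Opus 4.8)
The plan is to analyze the set
$$\hat V\ :=\ \bigcup_{i\in\Z,\ v\in\Z^2} T_v\bigl(\hat f^i(O)\bigr)\ =\ \pi^{-1}(V),\qquad V:=\bigcup_{n\in\Z}f^n(\pi(O)),$$
and to show it contains a compact ``fence'' around $[0,1]^2$, then pass to a finite subcover. Since every $(i,v)\in\Z\times\Z^2$ lies in $\Sigma(M)$ for $M=\max(|i|,\|v\|_\infty)$, the open sets $\hat V_M:=\bigcup_{(i,v)\in\Sigma(M)}T_v(\hat f^i(O))$ increase to $\hat V$. Hence it suffices to produce a \emph{compact} set $K\subset\hat V$ such that $[0,1]^2$ lies in a bounded connected component of $\R^2\setminus K$: by compactness $K$ is covered by finitely many of the $\hat V_M$, so $K\subset\hat V_M$ for some $M$, which is exactly what is claimed. (The hypothesis $B_\delta(0,0)\subset\rho(\hat f)$ plays no role in this proposition; it is used only later.)

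To build such a $K$ I would use the topology of essential and inessential sets on $\T^2$. First I isolate a fully essential component $W$ of $V$. Because $V$ is fully essential it represents every homotopy class, so it contains a loop of class $(1,0)$ and one of class $(0,1)$; these have algebraic intersection $\pm1$, hence must meet, so they lie in one component $W$. Any essential loop in $\T^2\setminus W$ would have nonzero intersection number with one of these two loops and thus would have to meet $W$ — impossible — so $\T^2\setminus W$ is inessential and $W$ is fully essential. Now $\T^2\setminus W$ is compact and inessential, so (by the standard structure of inessential compacta in the torus, \cf the references on strictly toral dynamics) it is contained in the interior of a tamely embedded closed disk $D^*$. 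Then $F:=\T^2\setminus\operatorname{int}(D^*)\subset W\subset V$ is a one–holed torus, and its spine is a \emph{filling} pair $X=\ell_1\cup\ell_2\subset F\subset V$ of simple closed curves meeting at a single point, so that $\T^2\setminus X$ is an open disk.

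Consequently $\pi^{-1}(X)\subset\hat V$ is a $\Z^2$-periodic, locally finite net whose complementary regions are the lifts of the disk $\T^2\setminus X$; these lifts are bounded — either because an inessential connected open subset of $\T^2$ has bounded lifts, or because a filling pair is carried to the standard pair by a homeomorphism of $\T^2$ whose lift differs from a linear automorphism by a bounded amount, making the cells uniformly bounded. Let $R_1,\dots,R_m$ be the finitely many cells meeting $[-1,2]^2$, put $P=\bigcup_j\overline{R_j}$ and $K=\partial P$. A short check shows $[0,1]^2\subset\operatorname{int}(P)$ (each point of $[0,1]^2$ lies in the closure of cells that meet $[-1,2]^2$, hence has a whole neighborhood inside $P$), that $K=\partial P$ is compact and contained in $\pi^{-1}(X)$, and that the connected component of $\R^2\setminus K$ containing $[0,1]^2$ cannot leave the bounded set $P$. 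Finally, as $X$ is compact and $X\subset\bigcup_n f^n(\pi(O))$, only finitely many iterates are needed, say $X\subset\bigcup_{|i|\le M_0}f^i(\pi(O))$, whence $\pi^{-1}(X)\subset\bigcup_{|i|\le M_0,\ v\in\Z^2}T_v(\hat f^i(O))$; covering the compact set $K$ by finitely many of these sets bounds the translation vectors as well, and taking $M$ larger than $M_0$ and than all the $\|v\|_\infty$ that occur gives $K\subset\bigcup_{(i,v)\in\Sigma(M)}T_v(\hat f^i(O))$.

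The main obstacle is the surface-topology input: guaranteeing a filling pair $X$ \emph{inside} the possibly disconnected open set $V$ (controlling the fully essential component, and the disk-neighborhood statement for inessential compacta), and then checking cleanly that the lifted net has bounded complementary cells, so that an honest compact fence around $[0,1]^2$ can be carved out of $\pi^{-1}(X)$. Everything else — the exhaustion argument, the neighborhood-in-$P$ computation, and the two compactness reductions — is routine, and the dynamics of $\hat f$ are not used in this step.
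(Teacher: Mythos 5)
Your proof is correct but takes a genuinely different route from the paper's. The paper works directly upstairs in $\R^2$: it lifts a $(1,0)$-loop and a $(0,1)$-loop in $V$ (chosen to pass through a common point, since such loops must intersect) to arcs $\alpha,\beta$ in $\hat U=\pi^{-1}(V)$, sweeps them into the translation-invariant ``lines'' $\Gamma_\alpha=\bigcup_{i\in\Z}T^i_{(1,0)}[\alpha]$ and $\Gamma_\beta=\bigcup_{i\in\Z}T^i_{(0,1)}[\beta]$, and then shifts four copies of these by lattice vectors so that their union $F\subset\hat U$ walls off a bounded complementary component containing $[0,1]^2$; $K=\partial W$ of that component is the compact fence, and a finite subcover gives $M$. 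You instead work downstairs, passing from the fully essential component $W$ of $V$ to a filling pair $X$ of simple closed curves inside it (via a tame disk neighborhood of the inessential compactum $\T^2\setminus W$ and the spine of the resulting one-holed torus), then lift $\pi^{-1}(X)$ to get a $\Z^2$-periodic net with bounded cells, taking the boundary of a finite union of cells as the fence. Both constructions are sound. The paper's is more elementary and self-contained — it needs nothing beyond arcs and translations — whereas yours invokes heavier (though standard) surface-topology facts: existence of a filling pair inside a fully essential open set, tame disk neighborhoods of inessential compacta, and boundedness of the complementary cells of the lifted net. Your explicit handling of the fully essential component of $V$ (rather than asserting outright, as the paper does, that $\hat U$ is connected) and your remark that the rotation-set hypothesis is not used in this step are both correct observations.
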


\proof
Since $\bigcup_{n\in\Z} f^n(\pi(O))$ is fully essential, its preimage by $\pi$, which we denote by $\hat U$, is a connected open set invariant by $\Z^2$ translates. Therefore, given a point $\hat y$ in $\hat U,$ there exist two connected arcs $\alpha$ and $\beta$ in $\hat U$ such that $\alpha$ connects $\hat y$ to $\hat y+(1,0)$ and $\beta$ connects $\hat y$ to $\hat y+(0,1).$  Let 
$$\Gamma_{\alpha}=\bigcup_{i=-\infty}^{\infty} T^{i}_{(1,0)}[\alpha],\quad \Gamma_{\beta}=\bigcup_{i=-\infty}^{\infty} T^{i}_{(0,1)}[\beta]$$
and note that, as $\hat U$ is $\Z^2$ invariant, then all integer translates of $\Gamma_{\alpha}$ and $\Gamma_{\beta}$ are contained in $\hat U.$

Fix an integer $R> \max \{\norm{x} : x\in [\alpha]\cup[\beta]\}$. Then, since 
$$\max \{(x)_2 : x \in \Gamma_{\alpha}\}=\max \{(x)_2 : x\in[\alpha]\}<R$$ and $\min \{(x)_2 : x \in \Gamma_{\alpha}\}>-R,$ it follows that $\R^2\sm \Gamma_{\alpha}$ has at least two connected components, one containing the semi-plane $\{ x : (x)_2\ge R\}$ and another containing $\{x : (x)_2 \le -R\}.$
Likewise, $\R^2\sm \Gamma_{\beta}$ has at least two connected components, one containing $\{ x : (x)_1\ge R\}$ and another one containing $\{x : (x)_1 \le -R\}.$

Now let
$$F= (\Gamma_\alpha-(0,R))\cup (\Gamma_\alpha+(0,R+1))\cup(\Gamma_\beta-(R,0))\cup(\Gamma_\beta+(R+1,0)),$$
and note that $F\subset \hat U$ and $\R^2\sm F$ has a connected component $W$ that contains $[0,1]^2$ and is contained in
$[-2R, 2R+1]\times [-2R, 2R+1]$ (see figure \ref{fig:1}). 
\begin{figure}[ht]
\begin{center}\includegraphics[height=6cm]{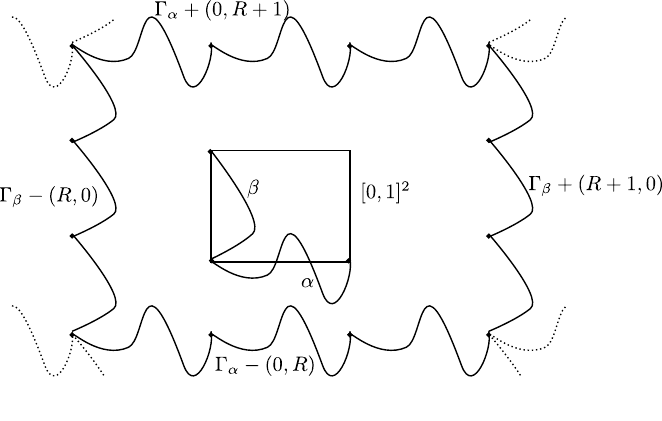}
\end{center}
\caption{}
\label{fig:1}
\end{figure}

Let $K = \bd W$. Then $K$ is a compact subset of $\hat U$.
Since $\hat{U} = \bigcup_{v\in\Z^2}\bigcup_{i\in\Z} T_v(\hat f^{i}(O))$ is an open cover of $K$, choosing a finite subcover we conclude the existence of $M$.
\endproof

\begin{proposition}\label{pr:fundamentaldomainmeets}
For every $w\in\Z^2,$ if $n>\frac{\norm{w}}{\delta},$ then $\hat f^n([0,1]^2)$ intersects $T_w([0,1]^2).$
\end{proposition}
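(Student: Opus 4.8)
The plan is to derive the proposition directly from Franks' realization theorem (Theorem~\ref{th:Franks}), after moving the realized point into the fundamental domain $[0,1]^2$.

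First I would translate the hypothesis into a statement about the rotation set. If $w\neq(0,0)$, the assumption $n>\norm{w}/\delta$ together with $n\in\N$ gives $n\delta>\norm{w}$, hence $\norm{w/n}=\norm{w}/n<\delta$, so that $w/n\in B_\delta((0,0))\subset\rho(\hat f)$; since $B_\delta((0,0))$ is open, $w/n$ lies in the \emph{interior} of $\rho(\hat f)$. When $w=(0,0)$ the vector $w/n=(0,0)$ is already in the interior of $\rho(\hat f)$ by our standing hypothesis, so this case is covered as well. This is also the only place where the constant $\delta$ fixed at the start of the proof is used, which explains the precise form $\norm{w}/\delta$ of the bound on $n$.

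Next I would apply Theorem~\ref{th:Franks} with the integer vector $w$ and the integer $q=n$: since $w/n$ belongs to the interior of $\rho(\hat f)$, there exists $\hat x\in\R^2$ with $\hat f^{n}(\hat x)=\hat x+w$. Finally, set $v=(\lfloor (\hat x)_1\rfloor,\lfloor (\hat x)_2\rfloor)\in\Z^2$ and $x_0=\hat x-v$, so that $x_0\in[0,1)^2\subset[0,1]^2$. Since $\hat f$ is a lift of a homeomorphism homotopic to the identity it commutes with integer translations, and hence so does $\hat f^{n}$; therefore
\[
\hat f^{n}(x_0)=\hat f^{n}(\hat x-v)=\hat f^{n}(\hat x)-v=(\hat x+w)-v=x_0+w .
\]
The point $x_0+w$ then lies in $\hat f^{n}([0,1]^2)$ (being the image of $x_0\in[0,1]^2$) and also in $[0,1]^2+w=T_w([0,1]^2)$, so the two sets meet, as desired.

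I do not expect any serious obstacle here: once Theorem~\ref{th:Franks} is available the argument is elementary, and the only points requiring mild care are the strictness of the inequality used to place $w/n$ in the interior of $\rho(\hat f)$ (and the degenerate case $w=(0,0)$), together with the routine but essential use of commutation with the deck transformations to transport the realized periodic point into the fixed fundamental square $[0,1]^2$.
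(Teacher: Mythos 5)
Your proof is correct and follows the same route as the paper: place $w/n$ in the interior of $\rho(\hat f)$ via the choice of $\delta$, apply Franks' theorem (Theorem~\ref{th:Franks}) to get a point with $\hat f^n(\hat y)=\hat y+w$, then use equivariance under integer translations to move that point into $[0,1]^2$. You simply make explicit the small details (strict inequality, the $w=(0,0)$ case, the translation computation) that the paper leaves implicit.
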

\proof
By our choice of $\delta$ (at the beginning of this section), $w/n$ is in the interior of $\rho(\hat{f})$. It follows from theorem \ref{th:Franks} that there is $\hat y\in \R^2$ such that $\hat f^n(\hat y)=\hat y + w$. Since we may choose $\hat y \in [0,1]^2$ (by using an appropriate integer translation), the proposition follows.
\endproof

\begin{proposition}\label{pr:cadawtemumtransladado}
For every $w\in\Z^2,$ and every $n>\frac{\norm{w}}{\delta},$ there exists $(j_w,v_w)$ in $\Sigma(2M)$ such that $\hat f^{n}(\hat f^{j_w}(T_{v_w}(O)))\cap T_w(O)\neq \emptyset$.
\end{proposition}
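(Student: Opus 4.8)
The plan is to reduce the statement to the single assertion
$$\hat f^n(K)\cap T_w(K)\neq\emptyset,$$
where $K$ and $M$ are as in Proposition~\ref{pr:cercadominiofundamental}, and then read off $(j_w,v_w)$ from the covering of $K$. Write $Q=[0,1]^2$ and let $W$ be the bounded component of $\R^2\sm K$ containing $Q$, so that $K=\bd W$; one may assume, as the construction in the proof of Proposition~\ref{pr:cercadominiofundamental} provides, that $W$ is simply connected, so that $\R^2\sm K$ has exactly two components, $W$ and an unbounded one. By Proposition~\ref{pr:fundamentaldomainmeets} --- this is where $n>\nrm{w}/\delta$ is used --- there is a point $z\in\hat f^n(Q)\cap T_w(Q)$, and since $Q\subseteq W$ we get $z\in\hat f^n(W)\cap T_w(W)$. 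Granting $\hat f^n(K)\cap T_w(K)\neq\emptyset$, one picks $z'$ in that intersection; from $K\subseteq\bigcup_{(i,v)\in\Sigma(M)}T_v(\hat f^{i}(O))$ one gets $(i_1,v_1),(i_2,v_2)\in\Sigma(M)$ with $z'\in T_{v_1}(\hat f^{\,n+i_1}(O))$ and $z'\in T_{w+v_2}(\hat f^{\,i_2}(O))$, hence $T_{v_1}(\hat f^{\,n+i_1}(O))\cap T_{w+v_2}(\hat f^{\,i_2}(O))\neq\emptyset$; applying the homeomorphism $\hat f^{-i_2}\circ T_{-v_1}$ and then the translation $T_{v_1-v_2}$ turns this into $\hat f^{n}\bigl(\hat f^{\,i_1-i_2}(T_{v_1-v_2}(O))\bigr)\cap T_w(O)\neq\emptyset$, with $(i_1-i_2,\,v_1-v_2)\in\Sigma(2M)$.

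For the topological input, observe that $\hat f^n(K)$ and $T_w(K)$ are continua, and that each of $\R^2\sm\hat f^n(K)$ and $\R^2\sm T_w(K)$ again has exactly two components, the bounded ones being $\hat f^n(W)$ and $T_w(W)$ respectively --- boundedness is preserved because $\hat f$ is a lift of a homeomorphism of $\T^2$. Since $z$ lies in the bounded complementary region of both continua, a standard planar fact (two disjoint continua lie each in a single complementary component of the other) yields: if $\hat f^n(K)\cap T_w(K)=\emptyset$, then $\hat f^n(W)\subseteq T_w(W)$ or $T_w(W)\subseteq\hat f^n(W)$.

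The heart of the argument is ruling out both inclusions, and this is precisely where $n>\nrm{w}/\delta$ is essential. Suppose $\hat f^n(W)\subseteq T_w(W)$ and set $g=T_w^{-1}\hat f^n$, a lift of $f^n$; then $g(W)\subseteq W$, hence $g^{k}(W)\subseteq W$ for every $k\ge0$. Since $\rho(g)=n\rho(\hat f)-w$ (see \cite{MZ89}) and $n>\nrm{w}/\delta$ forces $w/n\in\inter\rho(\hat f)$ (recall $B_\delta((0,0))\subseteq\rho(\hat f)$), the origin lies in the interior of $\rho(g)$; choose $p\in\Z^2\sm\{(0,0)\}$ and $q\in\N$ with $p/q\in\inter\rho(g)$. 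By Theorem~\ref{th:Franks} there is $\hat z\in\R^2$ with $g^q(\hat z)=\hat z+p$. As $Q\subseteq W$ we have $\pi(W)=\T^2$, so some integer translate $\hat z+v$ lies in $W$; since $g$ commutes with integer translations, $g^{kq}(\hat z+v)=\hat z+v+kp$ for all $k\ge0$, and by forward invariance of $W$ each of these points lies in $W$ --- impossible, as $W$ is bounded and $p\neq(0,0)$. The inclusion $T_w(W)\subseteq\hat f^n(W)$ is excluded identically using $h=T_w\hat f^{-n}$, which satisfies $h(W)\subseteq W$ and, by the same computation, $(0,0)\in\inter\rho(h)$.

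Hence $\hat f^n(K)\cap T_w(K)\neq\emptyset$ and the proposition follows as above. The step I expect to be the main obstacle is the rigidity argument of the previous paragraph: one has to recognize that the quantitative bound $n>\nrm{w}/\delta$ is exactly what places the origin in the interior of $\rho(T_w^{-1}\hat f^n)$, so that Franks' theorem produces a point with an escaping orbit, which is incompatible with a bounded forward-invariant set containing a fundamental domain; setting up the planar-topology dichotomy correctly (in particular the reduction to $\R^2\sm K$ having two components) is a routine but necessary preliminary.
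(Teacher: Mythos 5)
Your argument is correct and follows the same skeleton as the paper's: obtain $\hat f^n(U)\cap T_w(U)\neq\emptyset$ from Proposition~\ref{pr:fundamentaldomainmeets}, deduce $\hat f^n(K)\cap T_w(K)\neq\emptyset$, and read off $(j_w,v_w)\in\Sigma(2M)$ from the finite cover of $K$; the index bookkeeping at the end is exactly the paper's. Where you diverge is in how much is said about the passage to boundaries. The paper disposes of it in one clause (``and since $U$ is bounded this implies that $\hat f^n(\partial U)$ intersects $T_w(\partial U)$''), while you make the argument explicit: you reduce, via simple connectivity of $W$ and the disjoint-continua fact, to the dichotomy $\hat f^n(W)\subseteq T_w(W)$ or $T_w(W)\subseteq\hat f^n(W)$, and then rule out both inclusions by observing that $T_w^{-1}\hat f^n$ (resp.\ $T_w\hat f^{-n}$) has rotation set with $(0,0)$ in its interior, so Franks' theorem produces an escaping orbit incompatible with a bounded forward-invariant open set that contains a fundamental domain. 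This dynamical exclusion is genuinely needed --- pure topology does not forbid a lift of $f^n$ from mapping a bounded open set containing $[0,1]^2$ strictly into itself --- so your expansion supplies real content that the paper leaves implicit, and it does so correctly (including the verification that $n>\norm{w}/\delta$ is exactly what places $w/n$, and hence the origin for $T_w^{-1}\hat f^n$, in the interior of the relevant rotation set). In short: same approach, with a compressed step of the paper honestly and correctly unpacked.
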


\proof
By proposition \ref{pr:fundamentaldomainmeets}, if $U$ is the connected component of $\R^2\sm K$ that contains $[0,1]^2,$ then $\hat  f^n(U)$  intersects $T_w(U)$, and since $U$ is bounded this implies that $\hat f^n(\partial U)$ intersects $T_w(\partial U).$ Since $\partial U\subset K$, it follows that $\hat f^{n}(K)\cap T_w(K)\not=\emptyset.$ If $\hat x$ is a point in this intersection, then there exist integers $j_1, j_2\in [-M,M]$ and $v_1, v_2\in \Z^2$ with $\norm{v_i}_{\infty}<M$, $i\in\{1,2\},$  such that
$\hat x$ belongs to both $\hat f^{n}(\hat f^{j_1}(T_{v_1}(O))) $ and $\hat f^{j_2}(T_{v_2}(O+w)).$ Setting $j_w=j_1-j_2$ and $v_w=v_1-v_2$ yields the result.
\endproof

\begin{lemma}\label{lm:inbetween}
Let $v\in \R^2$ be a nonzero vector, and $K_1$, $K_2$ be two arcwise connected subsets of $\R^2$ such that $T_v(K_i)\cap K_i=\emptyset$ for $i\in \{1,2\}$. Suppose there are integers $i,j$ with $ i\geq 0$, $j > 0$ such that $T_v^{-i}(K_1)\cap K_2\not=\emptyset$ and
$T_v^{j}(K_1)\cap K_2\not=\emptyset$. Then $K_1$ intersects $K_2$. Moreover, there exists a $T_v$-translation arc $\gamma$ contained in $K_1\cup K_2$ and joining a point $x\in K_1$ to $T_v(x)\in K_2$.
\end{lemma}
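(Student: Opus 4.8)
The plan is to deduce both conclusions from a single "no‑skipping" statement combined with a translation‑arc extraction.

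First I would isolate the following claim: if $Y\subset\R^2$ is arcwise connected with $T_v(Y)\cap Y=\emptyset$ and $Y$ meets both $T_v^{a}(K_1)$ and $T_v^{b}(K_1)$ with $a<b$, then $Y$ meets $T_v^{c}(K_1)$ for every integer $c\in[a,b]$. Applying this with $Y=K_2$, $a=-i$, $b=j$, and noting that $0$ and $1$ both lie in $[-i,j]$ (here one uses $i\ge 0$ and $j\ge 1$), one gets simultaneously that $K_1\cap K_2\neq\emptyset$ and that $T_v(K_1)\cap K_2\neq\emptyset$. Fix $p\in K_1\cap K_2$ and a point $x\in K_1$ with $T_v(x)\in K_2$. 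Note that, throughout, Lemma \ref{lm:disjunto1vezdisjuntosempre} applies not only to $K_1,K_2$ but to any arc contained in $K_1$ or in $K_2$ (such an arc is disjoint from its own $T_v$‑translate, hence from all of them).

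Next I would build the translation arc. Choose an embedded arc $\alpha\subset K_1$ from $x$ to $p$ and an embedded arc $\beta\subset K_2$ from $p$ to $T_v(x)$; replacing $\alpha$ by its initial segment up to the first point of $[\alpha]$ lying on $[\beta]$, and $\beta$ by the matching terminal segment, I may assume $[\alpha]\cap[\beta]$ is a single point, which lies in $K_1\cap K_2$. Then $\sigma=\alpha*\beta$ is an embedded arc from $x$ to $T_v(x)$ with $[\sigma]\subset K_1\cup K_2$. Extend $\sigma$ to the $T_v$‑equivariant path $\hat\sigma\colon\R\to\R^2$ with $\hat\sigma|_{[0,1]}=\sigma$ (well defined since $\sigma(1)=T_v(\sigma(0))$). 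If $\hat\sigma$ is injective it is a properly embedded line and $\sigma$ is already the desired arc; otherwise a self‑intersection of $\hat\sigma$ produces points $P=T_v^{m}(Q)$ with $m\ge 1$ and $P,Q\in[\sigma]$, and Lemma \ref{lm:disjunto1vezdisjuntosempre}, applied to $[\alpha]\subset K_1$ and to $[\beta]\subset K_2$, forbids $P,Q$ from lying both in $[\alpha]$ or both in $[\beta]$. Using this one extracts from $\hat\sigma$, by the usual innermost‑loop/shortcut procedure of Brouwer theory, a genuine $T_v$‑translation arc $\gamma$ that still runs from a point of $K_1$ to its $T_v$‑image in $K_2$ and stays inside $K_1\cup K_2$; the disjointness hypotheses are exactly what keep this reduction inside the fundamental piece $K_1\cup K_2$.

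It remains to prove the no‑skipping statement, and this is where the real work lies. By induction on $b-a$ it suffices to show that $Y$ meets $T_v^{a+1}(K_1)$, so assume $b\ge a+2$ and suppose $Y\cap T_v^{a+1}(K_1)=\emptyset$. Picking an arc $\beta\subset Y$ from a point of $T_v^{a}(K_1)$ to a point of $T_v^{b}(K_1)$ together with an arc $\alpha\subset K_1$ joining the two corresponding points of $K_1$, and combining $\alpha$ with translates of $\beta$ exactly as in the previous paragraph — but now relative to the vector $(b-a)v$ — one produces a translation arc (for $T_v$ after passing, if necessary, to the subgroup generated by $(b-a)v$) disjoint from $T_v^{a+1}(K_1)$. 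Applying Lemma \ref{lm:Brouwerline} with $K=T_v^{a+1}(K_1)$ and this arc, $K$ avoids the entire forward, or the entire backward, orbit of the arc; but $Y$ threads through that orbit and meets $T_v^{a}(K_1)$ and $T_v^{b}(K_1)$, which are then forced to lie on opposite sides of it, a contradiction. I expect the main obstacle to be precisely this last point: verifying that the orbit of the constructed translation arc genuinely separates $T_v^{a}(K_1)$ from $T_v^{b}(K_1)$ in the plane, and keeping the various auxiliary arcs disjoint from $T_v^{a+1}(K_1)$. This is a Brouwer‑line separation argument and, like the loop‑removal step above, is where the Jordan curve theorem and the geometric content underlying Lemmas \ref{lm:disjunto1vezdisjuntosempre}–\ref{lm:Brouwerline} are actually used.
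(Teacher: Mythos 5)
Your route is genuinely different from the paper's, and I think the main step has a real gap. The paper proves both conclusions in one pass using only Lemma~\ref{lm:disjunto1vezdisjuntosempre}: it takes a simple arc $\alpha\subset K_2$ from $T_v^{-i}(K_1)$ to $T_v^j(K_1)$, locates the subarc $\alpha|_{[s_0,s_1]}$ whose open interior avoids \emph{every} integer translate of $K_1$, forms $K_3=T_v^{-i_0}(K_1)\cup\alpha([s_0,s_1])\cup T_v^{j_0}(K_1)$, and observes that if $i_0+j_0>1$ then $K_3$ is disjoint from $T_v(K_3)$ while $T_v^{i_0+j_0}(K_3)$ visibly meets $K_3$ --- contradicting Lemma~\ref{lm:disjunto1vezdisjuntosempre}. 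This simultaneously yields $K_1\cap K_2\neq\emptyset$, $T_v(K_1)\cap K_2\neq\emptyset$, and the ready-made translation arc $\beta*\alpha|_{[s_0,s_1]}$ (with $\beta\subset K_1$), since the open part of $\alpha|_{[s_0,s_1]}$ misses $T_v(K_1)\cup T_{-v}(K_1)$ by construction. No Brouwer line and no induction are needed.

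The gap in your proposal is in the Brouwer-line step of the no-skipping claim, and you yourself flag it. After constructing the $T_{(b-a)v}$-translation arc $\gamma'\subset Y\cup T_v^b(K_1)$ and applying Lemma~\ref{lm:Brouwerline} with $K=T_v^{a+1}(K_1)$, all you obtain is that $K$ avoids one \emph{half}-orbit $\bigcup_{n\ge 1}T_{(b-a)v}^{\pm n}[\gamma']$. This is not a separation statement: $K$ may still meet the other half of the line $\Gamma=\bigcup_{n\in\Z}T_{(b-a)v}^n[\gamma']$, because the translates $T_{(b-a)v}^n[\gamma']$ for $n\ne 0$ live in $T_{(b-a)v}^n(Y)\cup T_{v}^{(n+1)b-na}(K_1)$, and nothing in your hypotheses prevents $T_v^{a+1}(K_1)$ from meeting, say, $T_{(b-a)v}^{-1}(Y)$. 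So $\Gamma$ need not separate $T_v^a(K_1)$ from $T_v^b(K_1)$, and even if it did, both of those sets touch $\Gamma$ (they each contain an endpoint region of $\gamma'$), so ``opposite sides'' is not meaningful. In short, Lemma~\ref{lm:Brouwerline} does not give the dichotomy your contradiction needs; the correct tool at this point is Lemma~\ref{lm:disjunto1vezdisjuntosempre} applied to the auxiliary connected set $K_3$ as in the paper. The loop-removal extraction of the translation arc in your second paragraph is a separate hand-wave but is at least a standard Brouwer-theory move; the no-skipping step is the part that, as written, does not close.
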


\proof
We may assume that $K_1$ is compact by replacing it by a compact arc contained in $K_1$ and joining some point of $T_v^{i}(K_2)$ to a point of $T_v^{-j}(K_2)$.

Let $\alpha:[0,1]\to K_2$ be a simple arc satisfying $\alpha(0)\in T^{-i}_{v}(K_1)$ and $\alpha(1)\in T^j_{v}(K_1).$ Since $[\alpha]$ and $K_1$ are compact and $v$ is not null, there exists an integer $n_0$ such that, if $\abs{n}>n_0,$ then $T^{n}_v(K_1)$ is disjoint from $[\alpha].$

Let
\begin{eqnarray}\nonumber
s_0 = \max\{t\in[0,1]: \alpha(t)\in \bigcup_{n=0}^{n_0}T^{-n}_{v}(K_1)\},\\
 s_1 = \min\{t\in[s_0,1]: \alpha(t)\in \bigcup_{n=1}^{n_0}T^n_{v}(K_1)\},\nonumber
\end{eqnarray}
and let $i_0$ and $j_0$ be integers such that $\alpha(s_0)\in T^{-i_0}_{v}(K_1)$ and $\alpha(s_1)\in T^{j_0}_{v}(K_1).$ Finally, let
$$K_3 = T^{-i_0}_{v}(K_1)\cup\alpha([s_0,s_1])\cup T^{j_0}_{v}(K_1),$$
which is a connected set.

We claim that $i_0=0$ and $j_0=1$. To prove our claim, assume that it does not hold. Then $i_0+j_0>1$. Note that by construction, $\alpha((s_0,s_1))$ is disjoint from $\bigcup_{n\in\Z}T^n_{v}(K_1).$ Since $K_1\cap T_v(K_1)=\emptyset$, it follows from lemma \ref{lm:disjunto1vezdisjuntosempre} that $K_1\cap T_v^n(K_1)=\emptyset$ for any $n \neq 0$, and so $T_v(T^{-i_0}_{v}(K_1))$ is disjoint from $T^{j_0}_{v}(K_1)$  (because $i_0+j_0\neq 1$). From these facts and from the hypotheses follows that $K_3$ is disjoint from $T_v(K_3)$, so again by lemma \ref{lm:disjunto1vezdisjuntosempre} we have that $K_3$ is disjoint from $T_v^n(K_3)$ for all $n\neq 0$. But $i_0+j_0\neq 0$, and clearly $T^{j_0+i_0}_v(K_3)$ intersects $K_3$. This contradiction shows that $i_0+j_0 = 1$, i.e.  $i_0=0$ and $j_0=1.$

Since $\alpha(s_0)\in K_2\cap T^{-i_0}_v(K_1) = K_2\cap K_1$, we have shown that $K_1$ intersects $K_2$. Note that $\alpha(s_1)\in T_{v}^{j_0}(K_1) = T_v(K_1)$, and let $\beta:[0,1]\to K_1$ be a simple arc joining $\alpha(s_1)-v$ to $\alpha(s_0)$. Since $[\beta]\subset K_1$ and $\alpha([s_0,s_1))\cap (T_v(K_1)\cup T_{-v}(K_1))=\emptyset,$ it follows that $[\beta]\cap T_v(\alpha([s_0,s_1)))= \emptyset=T_v([\beta])\cap \alpha([s_0,s_1))$. Therefore, letting $\gamma$ be the concatenation of $\beta$ with $\alpha|_{[s_0,s_1]}$ we conclude that $\gamma$ is a $T_v$-translation arc joining $x=\alpha(s_1)-v\in K_1$ to $T_v(x)\in K_2$.
\endproof

Define, for each $(j,v)\in \Z\times \Z^2,$
the sets
$$I(j,v)=\{w\in\Z^2 : \, \hat f^{j}(T_v(O))\cap T_w(O)\not=\emptyset\}.$$

\begin{proposition}\label{pr:propertiesofsetofindices} The following properties hold.
\begin{enumerate}
\item{ If $R>0$ and $n>\frac{R}{\delta}$, then
$$\left(\Z^2\cap B_R((0,0))\right)\subset\bigcup_{(j,v)\in \Sigma(2M)} I(j+n,v).$$}
\item{For any $w,v\in\Z^2$ and $ j\in\Z,\, T_w(I(j,v))= I(j, T_w(v))$.}
\item{ Let $u,v \in \Z^2$ and $j\in\Z.$ If $S(u,C)\subset I(j,v)$, then $S(u,C-\norm{v-w}_{\infty})\subset I(j,w)$.}
 \end{enumerate}
\end{proposition}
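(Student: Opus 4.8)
The plan is to establish the three items in turn; all of them reduce to facts already available—chiefly Proposition~\ref{pr:cadawtemumtransladado}—together with the elementary observation that $\hat f$ commutes with every integer translation $T_w$, $w\in\Z^2$, since $\hat f$ is a lift of a homeomorphism homotopic to the identity. No new geometric argument should be needed.

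Item~(1) is essentially a restatement of Proposition~\ref{pr:cadawtemumtransladado}. Given $w\in\Z^2\cap B_R((0,0))$, the assumption $n>R/\delta$ yields $n>\norm{w}/\delta$, so that proposition provides a pair $(j_w,v_w)\in\Sigma(2M)$ with $\hat f^{n}(\hat f^{j_w}(T_{v_w}(O)))\cap T_w(O)\neq\emptyset$. Since $\hat f^{n}\hat f^{j_w}=\hat f^{j_w+n}$, this is exactly the assertion $w\in I(j_w+n,v_w)$, so $w\in\bigcup_{(j,v)\in\Sigma(2M)}I(j+n,v)$. The only subtlety is the strict versus non-strict inequality: one should check that $n>R/\delta$ really does give $n>\norm{w}/\delta$ for every $w$ in the ball, which holds in either convention for $B_R$.

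For item~(2) I would compute directly. Given $w',v,w\in\Z^2$ and $j\in\Z$, a point $w'$ lies in $I(j,v)$ precisely when $\hat f^{j}(T_v(O))\cap T_{w'}(O)\neq\emptyset$; applying the bijection $T_w$ and using $T_w\hat f^{j}=\hat f^{j}T_w$, $T_wT_v=T_{v+w}$, and $T_wT_{w'}=T_{w'+w}$, this is equivalent to $\hat f^{j}(T_{v+w}(O))\cap T_{w'+w}(O)\neq\emptyset$, that is, $w'+w\in I(j,v+w)$. Thus the map $w'\mapsto w'+w$ carries $I(j,v)$ bijectively onto $I(j,v+w)=I(j,T_w(v))$, which is precisely the identity $T_w(I(j,v))=I(j,T_w(v))$.

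Item~(3) then follows by combining item~(2) with the triangle inequality for $\norm{\cdot}_\infty$. Using item~(2) with translation vector $w-v$ gives $I(j,w)=T_{w-v}(I(j,v))$, so $z\in I(j,w)$ iff $z+v-w\in I(j,v)$. If $z\in S(u,C-\norm{v-w}_\infty)$, then $\norm{(z+v-w)-u}_\infty\le\norm{z-u}_\infty+\norm{v-w}_\infty\le C$, hence $z+v-w\in S(u,C)\subset I(j,v)$ and therefore $z\in I(j,w)$; this gives $S(u,C-\norm{v-w}_\infty)\subset I(j,w)$. Since each step is a short formal manipulation, I do not expect a genuine obstacle here—the only care needed is to keep the commutation relation $T_w\hat f=\hat f T_w$ straight and to distinguish the translation acting on subsets of $\Z^2$ in item~(2) from the translation on $\R^2$ appearing in the definition of $I(j,v)$.
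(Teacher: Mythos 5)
Your proof is correct and follows essentially the same route as the paper: item (1) is a restatement of Proposition~\ref{pr:cadawtemumtransladado}, item (2) is the direct computation using the commutation of $\hat f$ with integer translations, and item (3) is the same translation-plus-triangle-inequality argument the paper gives verbatim. Nothing to flag.
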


\proof Parts (1) and (2) are direct consequences of definition of $I(j,v)$ and proposition \ref{pr:cadawtemumtransladado}. To prove (3), let $ r \in S(u,C-\norm{v-w}_{\infty})$; since $ r+v-w \in S(u,C)\subset I(j,v)$, it follows from (2) that $r= T_{w-v}(r+v-w)\in I(j,T_{w-v}(v))=I(j,w)$.
\endproof

\begin{proposition}\label{pr:pintasegmento}
Let $i,k_0,k_1,j$  be integers with $k_0<k_1$, and $u\in\Z^2.$ If both $(i,k_0)$ and $(i,k_1)$ belong to $I(j,u),$ then $(i,k) \in I(j,u)$ for each integer $k\in [k_0,k_1]$. Likewise, if both $(k_0,i)$ and $(k_1,i)$ belong to $I(j,u),$ then $(k,i) \in I(j,u)$ for all integers $k\in [k_0,k_1]$.
\end{proposition}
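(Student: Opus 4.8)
The plan is to deduce both statements from Lemma \ref{lm:inbetween}, applied with translation vector $v=(0,1)$ for the first claim and $v=(1,0)$ for the second; by symmetry it is enough to prove the first. Fix $i,j,k_0,k_1,u$ as in the statement, and for an integer $m$ write $O_m=T_{(i,m)}(O)$ and $K=\hat f^j(T_u(O))$, so that $(i,m)\in I(j,u)$ says exactly $K\cap O_m\neq\emptyset$. We are given $K\cap O_{k_0}\neq\emptyset$ and $K\cap O_{k_1}\neq\emptyset$, and since the conclusion is trivial when $k\in\{k_0,k_1\}$ we may assume $k_0<k<k_1$.

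The one substantive input is that distinct integer translates of $O$ are disjoint. Indeed, if $w\in\Z^2\setminus\{(0,0)\}$ and $O\cap T_w(O)\neq\emptyset$, choose $y\in O$ with $y+w\in O$; as $O$ is open and connected it is path-connected, so there is a path in $O$ from $y$ to $y+w$, whose image under $\pi$ is a loop in $\pi(O)$ of nonzero homotopy class $w$, contradicting the fact that $\ol{\pi(O)}$ is inessential (its inessential neighborhood contains $\pi(O)$). Hence $O_m\cap O_{m'}=\emptyset$ for $m\neq m'$, and in particular $T_v(O_m)\cap O_m=O_{m+1}\cap O_m=\emptyset$. Similarly, since $\hat f$ commutes with integer translations, $T_w(K)=\hat f^j(T_u(T_w(O)))$, whence $K\cap T_w(K)=\hat f^j\big(T_u(O\cap T_w(O))\big)=\emptyset$ for every nonzero $w\in\Z^2$; in particular $T_v(K)\cap K=\emptyset$. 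Both $O_k$ and $K$ are arcwise connected, being homeomorphic images of the open connected set $O$.

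To finish, note that $O_{k_0}=T_v^{-(k-k_0)}(O_k)$ and $O_{k_1}=T_v^{\,k_1-k}(O_k)$, where $k-k_0\ge 0$ and $k_1-k>0$. We may therefore invoke Lemma \ref{lm:inbetween} with $K_1:=O_k$, $K_2:=K$, vector $v=(0,1)$, and the integers $k-k_0$ and $k_1-k$ playing the roles of $i$ and $j$ there: the required hypotheses $T_v(K_1)\cap K_1=\emptyset$, $T_v(K_2)\cap K_2=\emptyset$, $T_v^{-(k-k_0)}(K_1)\cap K_2=O_{k_0}\cap K\neq\emptyset$ and $T_v^{\,k_1-k}(K_1)\cap K_2=O_{k_1}\cap K\neq\emptyset$ have all been verified. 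The lemma yields $O_k\cap K\neq\emptyset$, i.e. $(i,k)\in I(j,u)$, as desired. The second assertion is proved identically, taking $O_m=T_{(m,i)}(O)$ and $v=(1,0)$.

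I expect no genuine obstacle here beyond careful bookkeeping in the application of Lemma \ref{lm:inbetween}; the only real point is the disjointness of the integer translates of $O$, which is precisely where the hypothesis that $\ol{\pi(O)}$ is inessential is used.
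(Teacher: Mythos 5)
Your proof is correct and is essentially the paper's proof: both reduce the claim to an application of Lemma~\ref{lm:inbetween} with $K_1$ a suitable integer translate of $O$, $K_2=\hat f^{j}(T_u(O))$, and $v$ a unit coordinate vector. You spell out the two hypotheses the paper leaves implicit (disjointness of distinct integer translates of $O$ and of $K_2$, derived from $\ol{\pi(O)}$ being inessential), and you also correctly take $v=(0,1)$ for the first statement, where the paper's one-line proof says $v=(1,0)$ — that looks like a typo in the paper, since it is the second coordinate that varies in the first statement.
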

\begin{proof}
This is a direct consequence of lemma \ref{lm:inbetween}, choosing $K_1 = O +(i,k)$, $K_2=f^{j}(T_u(O))$ and $v=(1,0)$ for the first case, and an analogous choice for the second case.
\end{proof}

\begin{lemma}\label{lm:pintasubquadrado}
Given $C_1\in\Z$ there is $C_2>0$ such that, for any $w_1\in \Z^2$, $C>C_2$, and $n>(\norm{w_1}+\sqrt{2}C)/\delta$, there exist $w_2\in\Z^2$ and $\overline j \in [-2M,2M]\cap \Z$,  such that $$S(w_2,C_1)\subset S(w_1, C)\cap I(n+\overline j, \overline w).$$
\end{lemma}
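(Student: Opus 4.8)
The plan is to reduce Lemma~\ref{lm:pintasubquadrado} to a purely combinatorial statement about covers of a large discrete square by subsets of $\Z^2$ that are interval-convex along every horizontal and every vertical lattice line, and then to prove that statement by an area count.

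\textbf{Reduction.} Fix once and for all $D=2M+\norm{\ol{w}}_\infty$ and $N=\#\Sigma(2M)$, and write $C'=\lfloor C\rfloor-1$ (a positive integer once $C_2$ is chosen large). It is enough to produce $w_2\in\Z^2$, $\ol{j}\in[-2M,2M]\cap\Z$ and $v\in\Z^2$ with $\norm{v}_\infty\le 2M$ such that $S(w_2,C_1+D)\subset S(w_1,C')\cap I(n+\ol{j},v)$: indeed $\norm{v-\ol{w}}_\infty\le 2M+\norm{\ol{w}}_\infty=D$, so Proposition~\ref{pr:propertiesofsetofindices}(3) upgrades this to $S(w_2,C_1)\subset I(n+\ol{j},\ol{w})$, while $S(w_2,C_1)\subset S(w_2,C_1+D)\subset S(w_1,C')\subset S(w_1,C)$, which is the desired conclusion. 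To find such a square, observe that every point of $S(w_1,C')$ is within Euclidean distance $\sqrt2\,C'$ of $w_1$, hence $S(w_1,C')\subset B_R((0,0))$ with $R=\norm{w_1}+\sqrt2\,C'+1<\norm{w_1}+\sqrt2\,C$; since $n>(\norm{w_1}+\sqrt2\,C)/\delta$ we get $n>R/\delta$, and Proposition~\ref{pr:propertiesofsetofindices}(1) yields $S(w_1,C')\subset\bigcup_{(j,v)\in\Sigma(2M)}I(n+j,v)$. Thus the $(2C'+1)\times(2C'+1)$ lattice array $S(w_1,C')$ is covered by the $N$ sets $I(n+j,v)$, $(j,v)\in\Sigma(2M)$; and by Proposition~\ref{pr:pintasegmento}, each of these sets meets every horizontal and every vertical lattice line — and hence also each such line inside the box $S(w_1,C')$ — in a set of consecutive integers.

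\textbf{Combinatorial core.} It therefore suffices to prove: for all $s,N\in\N$ there is $m_0\in\N$ such that whenever an $m\times m$ lattice array with $m\ge m_0$ is covered by $N$ subsets of $\Z^2$, each meeting every horizontal and every vertical lattice line in a set of consecutive integers, one of the subsets contains an $s\times s$ sub-array. Granting this with $s=2(C_1+D)+1$ — so $m_0$ depends only on $M$ and $C_1$ — one chooses $C_2$ so large that $C>C_2$ forces $2C'+1\ge m_0$; then the covering produced above contains, in one of the sets $I(n+j,v)$ with $\abs{j}\le 2M$ and $\norm{v}_\infty\le 2M$, an $s\times s$ sub-array, which is precisely a set $S(w_2,C_1+D)$, and the Reduction step finishes the proof.

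\textbf{The area estimate.} I would prove the combinatorial statement via the estimate: \emph{if $A\subset\Z^2$ meets every horizontal and every vertical line in a set of consecutive integers and contains no $s\times s$ sub-array, then $\#(A\cap[1,m]^2)\le c\,s\,m$ for an absolute constant $c$.} Write $I_j$ for the set of columns of $A$ lying in row $j$. Using that the columns of $A$ are also ``intervals'', for $j<j'$ one gets $\bigcap_{j''=j}^{j'}I_{j''}=I_j\cap I_{j'}$; consequently $A$ contains an $s\times s$ sub-array if and only if $\#(I_j\cap I_{j'})\ge s$ for some $j'\ge j+s-1$. Now group rows $1,\dots,m$ into consecutive blocks of length $s-1$ and choose a widest row in each block; the chosen rows coming from every other block have pairwise overlaps of fewer than $s$ points, since their indices differ by at least $s-1$. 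A family of $r$ subintervals of $[1,m]$ with pairwise overlaps $<s$ has total length $O(m+s\,r)$ (split into short intervals, of length $<2s$, and long ones, whose left endpoints must then advance by more than half their lengths), and here $r=O(m/s)$; hence the widest-row widths over all blocks sum to $O(m)$, and since every row of a block is no wider than its widest, $\#(A\cap[1,m]^2)\le(s-1)\cdot O(m)=O(sm)$. Finally, if none of the $N$ covering sets contained an $s\times s$ sub-array, summing the estimate over them gives $m^2\le c\,s\,N\,m$, i.e.\ $m\le c\,s\,N$; so $m_0:=c\,s\,N+1$ works.

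\textbf{Main obstacle.} Everything outside the area estimate is bookkeeping with the already-proved Propositions~\ref{pr:propertiesofsetofindices} and \ref{pr:pintasegmento} and some $\ell^\infty$-vs-$\ell^2$ juggling. The delicate point is the estimate itself: translating ``no $s\times s$ sub-array'' into the pairwise-overlap condition (this is exactly where column-convexity enters) and then bounding the total length of a family of boundedly overlapping intervals, so as to make rigorous the intuition that a doubly interval-convex set without a large sub-square is essentially a bounded-width neighbourhood of a monotone staircase, and hence has area $O(sm)$ rather than $\Theta(m^2)$.
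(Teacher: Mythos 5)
Your proof is correct, and it takes a genuinely different route from the paper's. The paper finds a large sub-square of $S(w_1,C)$ colored by a single $(j,v)\in\Sigma(2M)$ via a \emph{nested pigeonhole}: one walks vertically through rows spaced $D$ apart, at each step narrowing a horizontal interval by a factor $L>\#\Sigma(2M)$ (so that $L$ equally spaced points share a tag $(j,v)$), iterates $L$ times until two rows carry the same tag, and then fills in the rectangle between them using Proposition~\ref{pr:pintasegmento}; this produces the doubly-exponential constant $C_2=L^LD$. You instead isolate a clean combinatorial lemma — an $m\times m$ lattice square covered by $N$ doubly interval-convex sets with $m$ large must contain an $s\times s$ sub-square inside one of the sets — and prove it by an area count: a doubly interval-convex $A$ with no $s\times s$ sub-square satisfies $\#(A\cap[1,m]^2)=O(sm)$. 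The two delicate ingredients both check out: the equivalence ``no $s\times s$ sub-array $\Leftrightarrow$ $\#(I_j\cap I_{j'})<s$ whenever $j'-j\ge s-1$'' does use column-convexity exactly as you say (it gives $\bigcap_{j''=j}^{j'}I_{j''}=I_j\cap I_{j'}$), and the total-length bound $O(m+sr)$ for intervals with pairwise overlap $<s$ follows from sorting by left endpoint and noting that among the long intervals the right endpoints are also monotone, forcing $a_{i+1}>a_i+\#I_i-s\ge a_i+\#I_i/2$. The surrounding reductions (Proposition~\ref{pr:propertiesofsetofindices}(1) to get the cover of $S(w_1,C')$, Proposition~\ref{pr:propertiesofsetofindices}(3) to pass from $v$ to $\ol{w}$, the $\ell^\infty$ versus $\ell^2$ juggling with $C'=\lfloor C\rfloor-1$) are all sound. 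One small inaccuracy: $s=2(C_1+D)+1$ with $D=2M+\norm{\ol{w}}_\infty$, so $m_0$ depends on $M$, $C_1$ \emph{and} $\ol{w}$, not only on $M$ and $C_1$; but $\ol{w}$ is fixed throughout, so this is harmless. The upshot is a genuinely more economical argument, yielding a $C_2$ that is polynomial rather than super-exponential in $M$, and the covering/area viewpoint is arguably more illuminating than the recursive construction.
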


\proof
Define the auxiliary constants $L= (4M+1)^3 +1$, which is larger than the cardinality of the set $\Sigma(2M)$, and $R=2M+\norm{\overline{w}}_{\infty}.$ Let also $D= 2C_1 + 2R+1$, and let $C_2=L^L D.$

Note that if $C>C_2$ and $n$ is chosen as in the statement, proposition \ref{pr:propertiesofsetofindices} (1) implies that
$$S(w_1, C)\subset \bigcup_{(j,v)\in \Sigma(2M)}I(n+j,v).$$
Let $i_0,k_0$ be integers such that $(i_0,k_0)= w_1 - (C,C)$. Define,  for $1\le s\le L-1, \, k_s = k_0+sD$. For $1\le s \le L$ we will define $i_s$ in $\Z$ and $(j_s,v_s)\in\Sigma(2M)$ recursively satisfying the following properties:
\begin{enumerate}
\item{$i_{s-1}\le i_{s} \le i_{s-1} +\frac{C_2}{L^{s-1}}-\frac{C_2}{L^{s}}$}
\item{For any $ i_{s}\le i \le i_s+ \frac{C_2}{L^s}$, the point $(i,k_{s-1})\in I(n+j_s,v_s)$.}
\end{enumerate}

Suppose we have already defined $i_s$ and $(j_s,v_s).$ Consider the $L$ different points of the form $(i_{s,r},k_s)$ with $0\le r \le L-1$, where $i_{s,r}=i_s+r\frac{C_2}{L^{s+1}}$. 
Note that from the recursion hypothesis (1) follows that $i_{s} \leq i_0+C_2-C_2/L^{s}$, so that $$i_{s,r}\leq i_0+C_2 - C_2/L^s +(L-1)C_2/L^{s+1} = i_0+C_2 - C_2/L^{s+1} \leq i_0+C.$$ 
In particular, $(i_{s,r},k_s) \in  S(w_1, C)$ for each $r\in \{0,\dots, L-1\}$, so there exists $(j_{s,r},v_{s,r})\in \Sigma(2M)$, satisfying $(i_{s,r},k_s)\in I(n+j_{s,r},v_{s,r})$. Since $L$ is greater than the cardinality of $\Sigma(2M)$, by the pigeonhole principle there exists $r_1<r_2$ such that $(j_{s,r_1},v_{s,r_1})=(j_{s,r_2},v_{s,r_2})$. Define $i_{s+1}=i_{s,r_1}$ and $(j_{s+1},v_{s+1})=(j_{s,r_1},v_{s,r_1})$.
Since both $(i_{s,r_1},k_s)$ and $(i_{s,r_2},k_s)$ belong to $I(n+ j_{s+1},v_{s+1})$, it follows from proposition \ref{pr:pintasegmento} that $(i,k_s)\in I(n+j_{s+1},v_{s+1})$ whenever $i_{s,r_1}\le  i \le i_{s,r_2}$. Note also that $i_{s+1}\ge i_s$, and that $i_{s+1}+\frac{C_2}{L^{s+1}}\le i_{s,r_2}\le i_s+ \frac{C_2}{L^{s}}$. So, $i_{s+1}$ and $(j_{s+1},v_{s+1})$ satisfy properties (1) and (2) (see figure \ref{fig:2}).

\begin{figure}[ht]
\begin{center}\includegraphics{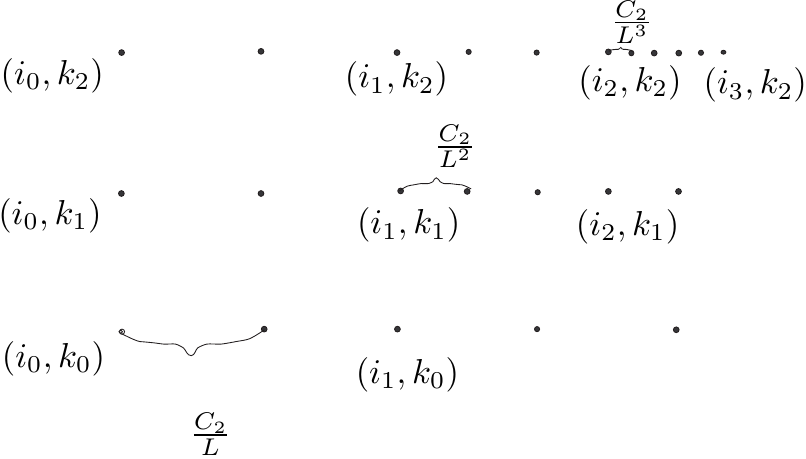}
\end{center}
\caption{}
\label{fig:2}
\end{figure}

Having defined $i_s$ and $(j_s,v_s)$ for $1\le s\le L$, we can again use the pigeonhole principle to find integers $1\le s_1<s_2\le L$ such that $(j_{s_1},v_{s_1})=(j_{s_2},v_{s_2}).$

By property (2) in the recursion, for each integer $i \in [i_{s_2}, i_{s_2}+\frac{C_2}{L^{s_2}}]$ the point $(i_{s_2}+i,k_{s_2-1})$ belongs to $I(n+j_{s_2},v_{s_2}).$

As $i_{s_1}\le i_{s_2}$ and $i_{s_2}+\frac{C_2}{L^{s_2}}< i_{s_1}+\frac{C_2}{L^{s_1}}$, then the interval $[i_{s_2},i_{s_2}+\frac{C_2}{L^{s_2}}]$ is contained in $[i_{s_1},i_{s_1}+\frac{C_2}{L^{s_1}}] $, so again by property (2), for each integer $ i \in [i_{s_2}, i_{s_2}+\frac{C_2}{L^{s_2}}]$, the point $(i_{s_2}+i,k_{s_1-1})$ belongs to $I(n+j_{s_1},v_{s_1}) = I(n+j_{s_2},v_{s_2}).$

Hence, for all $0\le i\le \frac{C_2}{L^{s_2}},$  both $(i_{s_2}+i,k_{s_1-1})$ and  $(i_{s_2}+i,k_{s_2-1})$ belong to $I(n+j_{s_2},v_{s_2})$. From proposition \ref{pr:pintasegmento} applied to the latter pair of points for each $i$ follows that
$$\{(i,k): i_{s_2}\le i \le i_{s_2}+\frac{C_2}{L^{s_2}}, k_{s_1-1}\le k \le k_{s_2-1}\}\subset I(n+j_{s_2},v_{s_2}).$$

Note that $k_{s_2-1}-k_{s_1-1}\ge D,$ and also $\frac{C_2}{L^{s_2}}\ge D.$ Therefore, if we define $w_2=(i_{s_2},k_{s_1-1})+ (R+ C_1+1,R+ C_1+1)$ then $S(w_2, R+C_1)\subset I(n+j_{s_2},v_{s_2}),$ and $S(w_2, R+C_1)\subset S(w_1, C).$

Finally, since $\norm{v_{s_2}-\ol{w}}_{\infty}\le R$, if $\ol{j}=j_{s_2}$ we have, by proposition \ref{pr:propertiesofsetofindices}, $S(w_2, C_1)\subset I(n+\ol{j},\ol{w})$.
\endproof

\begin{proposition}\label{pr:intersquadrantes}
For every $R>0$ there exists  $n_0 \in \N$ such that, if $n>n_0$, then $\hat f^n(O)$ intersects each of the four sets
$$ U_1= \{(y: (y)_1>R, (y)_2>R\},\quad U_2=\{y:(y)_1<-R, (y)_2>R\},$$
$$ U_3= \{y: (y)_1>R, (y)_2<-R\},\quad U_4=\{y:(y)_1<-R, (y)_2<-R\}$$
\end{proposition}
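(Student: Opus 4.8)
The plan is to use Lemma~\ref{lm:pintasubquadrado} iteratively to propagate control of the index sets $I(n+\overline j,\overline w)$ into arbitrarily large squares, and then to translate this into the desired intersection statement. Fix $R>0$. I would first apply Lemma~\ref{lm:pintasubquadrado} with $C_1$ chosen large enough that a square $S(w_2,C_1)$ is guaranteed to contain integer points in each of the four open regions $U_1,\dots,U_4$ (translated appropriately); this forces a size constraint on $C_1$ that depends only on $R$. The lemma then produces, for every sufficiently large $n$, a center $w_2$ and an exponent $\overline j\in[-2M,2M]$ with $S(w_2,C_1)\subset S(w_1,C)\cap I(n+\overline j,\overline w)$. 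The difficulty is that a priori we do not control where $w_2$ lies inside $S(w_1,C)$, so we cannot immediately say the small square meets all four quadrants. The remedy is to choose $w_1$ and $C$ so that \emph{every} sub-square of size $C_1$ inside $S(w_1,C)$ is forced to meet all four quadrants relative to $R$ — that is, take $w_1=(0,0)$ and $C$ large enough (say $C > R + 2C_1 + $ a safety margin depending on $M$ and $\norm{\overline w}_\infty$) that no matter where the $C_1$-square sits, it is far from the origin only if it has been pushed into one quadrant, but by subdividing we can instead locate four separate squares, one deep in each quadrant.

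Concretely, the cleaner route is to run Lemma~\ref{lm:pintasubquadrado} four times, once for each quadrant, starting from four large squares $S(w_1^{(\ell)},C)$ each already contained in the corresponding region $U_\ell$ (possible since $w_1^{(\ell)}$ can be placed far from the origin). For each $\ell$, provided $n > (\norm{w_1^{(\ell)}}+\sqrt2 C)/\delta$, the lemma yields $w_2^{(\ell)}$ and $\overline j_\ell\in[-2M,2M]$ with $S(w_2^{(\ell)},C_1)\subset S(w_1^{(\ell)},C)\cap I(n+\overline j_\ell,\overline w)\subset U_\ell \cap I(n+\overline j_\ell,\overline w)$. Taking $n_0$ to be the maximum of the four thresholds, for every $n>n_0$ and each $\ell$ there is a lattice point $w^{(\ell)}\in U_\ell$ with $w^{(\ell)}\in I(n+\overline j_\ell,\overline w)$, i.e. $\hat f^{n+\overline j_\ell}(T_{\overline w}(O))\cap T_{w^{(\ell)}}(O)\neq\emptyset$. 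This is not quite the statement yet, because the exponent is shifted by $\overline j_\ell$ and the translate is by $\overline w$ rather than nothing; I would absorb these by a further bookkeeping step, re-centering: apply $\hat f^{-\overline j_\ell}T_{-\overline w}$ (which commutes suitably with integer translations) to deduce $\hat f^n(O)$ meets a translate $T_{w^{(\ell)}-\overline w}(O)$ of $O$ by a vector still deep in $U_\ell$ once $R$ was chosen with enough slack to cover the $\norm{\overline w}$ displacement, and then observe that $T_{w^{(\ell)}-\overline w}(O)$, being a translate of an open set whose closure projects to an inessential (hence small-diameter-uncontrolled but bounded-in-each-fundamental-domain) set, actually meets $U_\ell$ itself provided its center is far enough out.

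The one genuine subtlety — and the step I expect to be the main obstacle — is the passage from ``$\hat f^n(O)$ meets a $\overline w$-translate of $O$ centered at a lattice point in $U_\ell$'' to ``$\hat f^n(O)$ meets $U_\ell$''. The index set $I(j,v)$ only records which integer translates of $O$ are hit, and $O$ has no a priori diameter bound, so a translate $T_{w^{(\ell)}}(O)$ with $w^{(\ell)}$ deep in $U_\ell$ could in principle stick out of $U_\ell$. However, $\ol{\pi(O)}$ is inessential, which (by the definition recalled in Section~2) means $O$ is contained in a bounded neighbourhood of the $\Z^2$-orbit structure; more directly, since $O$ is connected and $\pi(O)$ has inessential closure, a connected component of $\pi^{-1}(\pi(O))$ containing $O$ is bounded, so $O$ itself has finite diameter, say $d$. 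Then choosing each $w_1^{(\ell)}$ so that $S(w_1^{(\ell)},C)$ lies in the sub-region where all points are at $\infty$-distance $>R+d$ from the complement of $U_\ell$ guarantees $T_{w^{(\ell)}}(O)\subset U_\ell$, and the argument closes. So the real work is to pin down this boundedness of $O$ from the hypothesis on $\ol{\pi(O)}$, choose $C_1$ and the four base squares $S(w_1^{(\ell)},C)$ accordingly, and set $n_0$ as the maximum of the resulting four thresholds from Lemma~\ref{lm:pintasubquadrado}.
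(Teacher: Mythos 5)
Your proof is correct in outline, but you reach for a much heavier tool than the paper does. You invoke Lemma~\ref{lm:pintasubquadrado} (the elaborate pigeonhole argument producing a whole sub-square $S(w_2,C_1)\subset I(n+\overline j,\overline w)$) four times, once per quadrant. The paper instead applies Proposition~\ref{pr:cadawtemumtransladado} directly to the four lattice points $w_i=(\pm C,\pm C)$: this already produces, for each $i$, some $(j_i,v_i)\in\Sigma(2M)$ with $w_i\in I(n+j_i,v_i)$, which is all that is needed. Producing a full sub-square of indices is wasted effort here; a single point per quadrant suffices, so Lemma~\ref{lm:pintasubquadrado} (whose real purpose is Claim~\ref{cl:pintaquadradoeoposto}) is overkill. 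Once the index points are in hand, the two arguments converge: one absorbs the iterate shift via the uniform displacement bound $\norm{\smash{\hat f^{j}(x)-x}}\le 2MC_2$ with $C_2=\max_x\norm{\smash{\hat f(x)-x}}$, absorbs the translate $v_i$ using $\norm{v_i}_\infty\le 2M$, and concludes using boundedness of $O$, taking $C$ large enough that the resulting ball $B_{C_1+2M(C_2+1)}(w_i)$ lands inside $U_i$. You correctly flag the boundedness of $O$ as the genuine subtlety and correctly trace it to the inessentiality of $\ol{\pi(O)}$ (a point the paper leaves implicit: "Let $C_1$ be such that $O\subset B_{C_1}((0,0))$"); to make your sketch airtight, note that the argument needs the \emph{compactness} of $\ol{\pi(O)}$, not merely inessentiality — because $\ol{O}$ is connected and contained in a component $\hat W$ of $\pi^{-1}(W)$ for $W$ an inessential open neighborhood, and $\pi|_{\hat W}$ is a homeomorphism onto $W$, so $\ol{O}\subset(\pi|_{\hat W})^{-1}(\ol{\pi(O)})$, which is compact because it is the preimage of a compact set under a homeomorphism. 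Your first paragraph's exploration of re-using a single large square $S((0,0),C)$ and subdividing is, as you yourself note, a dead end (you cannot control where $w_2$ lands); the pivot to four separate source squares is the right move, and it is exactly what the paper does, only with the lighter lemma.
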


\proof
Let $C_1$ be such that $O\subset B_{C_1}((0,0)),$ and let $C_2=\max_{x\in \R^2} \norm{\smash{\hat f(x)-x}}$, which is finite since $f$ is homotopic to the identity.

Let $C> 2M(C_2+1)+ C_1+ R$ be an integer, and consider $w_1=(C,C), w_2=(C,-C), w_3=(-C,C)$ and $w_4=(-C,-C).$

If $n> \frac{\sqrt{2} C}{\delta}$ then, by proposition \ref{pr:cadawtemumtransladado}, for each $i\in\{1,2,3,4\}$ there exists
$(j_i, v_i)\in \Sigma(2M)$ such that $w_i\in I(n+j_i,v_i)$. This implies that $\hat f^{n+j_i}(T_{v_i}(O))$ intersects $T_{w_i}(O)\subset T_{w_i}(B_{C_1}((0,0)))=B_{C_1}(w_i)$ for each $i\in \N$. Since the definition of $C_2$ implies that, for every $x\in\R^2$, $$\norm{\smash{\hat f^{n+j_i}(x)-\hat f^{n}(x)}}\le \abs{j_i} C_2\le 2MC_2,$$ 
we conclude that $\hat f^{n}(T_{v_i}(O))$ intersects $B_{C_1+2M C_2}(w_i)$. This means that $\hat f^{n}(O)$ intersects $T_{v_i}^{-1}(B_{C_1+2M C_2}(w_i))$. Since $\norm{v_i}_{\infty}\le 2M$, it follows that
 $$T_{v_i}^{-1}(B_{C_1+2M C_2}(w_i))\subset B_{C_1+2M (C_2+1)}(w_i)\subset U_i,$$
so $\hat{f}^n(O)\cap U_i\neq \emptyset$.
\endproof

\begin{claim}\label{cl:mainclaim} 
Suppose that $\hat{f}^n(O)\cap T_{\ol{w}}(O)=\emptyset$ for all $n\in \Z$.
Let $k\in \N$ and $v\in\Z^2$ be such that $S(v,1)\subset I(k,\ol{w}).$ Then, for every sufficiently large $n,$ we have $v\in  I(n,(0,0)).$
\end{claim}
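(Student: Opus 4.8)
The plan is to turn the hypothesis $S(v,1)\subset I(k,\overline w)$ into a Brouwer line in the plane that $\hat f^n(O)$ is forced to cross for every large $n$, and then read off $v\in I(n,(0,0))$ from the location of the crossing.

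First I would make some reductions. We may assume $\overline w\neq(0,0)$, since otherwise the hypothesis fails already at $n=0$. By the definition of $I$ and because $T_{\overline w}$ commutes with $\hat f$, the set $K_0:=\hat f^k(T_{\overline w}(O))=\hat f^k(O)+\overline w$ is arcwise connected, and the assumption $S(v,1)\subset I(k,\overline w)$ says precisely that $K_0\cap(O+v+e)\neq\emptyset$ for every $e\in\{-1,0,1\}^2$. Also, since $\overline{\pi(O)}$ is inessential and a homeomorphism homotopic to the identity carries inessential sets to inessential sets, each $f^j(\pi(O))=\pi(\hat f^j(O))$ is inessential, so $O$ and every $\hat f^j(O)$ is disjoint from each of its nonzero integer translates; combined with the hypothesis this also gives $\hat f^n(O)\cap(O-\overline w)=\emptyset$ and, more generally, $\hat f^n(O)\cap(\hat f^m(O)+\overline w)=\emptyset$ for all $m,n\in\Z$. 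These disjointness facts are exactly the hypotheses needed to apply Lemmas \ref{lm:disjunto1vezdisjuntosempre} and \ref{lm:inbetween}.

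Next I would build the Brouwer lines. Since $K_0$ meets $O+v+e_1$ and $O+v-e_1$ and $T_{e_1}(K_0)\cap K_0=\emptyset=T_{e_1}(O+v)\cap(O+v)$, Lemma \ref{lm:inbetween} applied with $K_1=K_0$, $K_2=O+v$ and translation vector $e_1$ yields a $T_{e_1}$-translation arc $\gamma_1$ contained in $K_0\cup(O+v)$, joining a point of $K_0$ to its $e_1$-translate in $O+v$. By Lemma \ref{lm:disjunto1vezdisjuntosempre} applied to $\gamma_1$ with its terminal endpoint deleted, the translates $[\gamma_1]+ie_1$, $i\in\Z$, are pairwise disjoint except for consecutive shared endpoints, so $\Gamma_1:=\bigcup_{i\in\Z}([\gamma_1]+ie_1)$ is a closed connected set contained in a horizontal strip $\R\times[m_1,M_1]$ and unbounded both to the left and to the right; by the usual Brouwer-theoretic reasoning for translation arcs (cf. Lemma \ref{lm:Brouwerline}), $\Gamma_1$ separates $\R^2$ into an ``upper'' region containing $\R\times(M_1,\infty)$ and a ``lower'' one containing $\R\times(-\infty,m_1)$. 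The symmetric construction with $e_2$ produces $\gamma_2$ and a vertical Brouwer line $\Gamma_2$ separating $\R^2$ into a left and a right region; moreover $\Gamma_\ell\subset\bigcup_i\bigl((K_0+ie_\ell)\cup(O+v+ie_\ell)\bigr)$ for $\ell=1,2$.

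Finally, choose $R$ larger than $|m_1|,|M_1|$ and the analogous bounds for $\Gamma_2$, and let $n_0$ be the integer furnished by Proposition \ref{pr:intersquadrantes} for this $R$. For every $n>n_0$ the connected set $\hat f^n(O)$ meets all four of $U_1,\dots,U_4$, hence meets both components of $\R^2\setminus\Gamma_1$ and both components of $\R^2\setminus\Gamma_2$, and therefore intersects $\Gamma_1$ and $\Gamma_2$. If such an intersection lies on an $O+v$ translate, say $\hat f^n(O)\cap(O+v+ie_1)\neq\emptyset$, then $v+ie_1\in I(n,(0,0))$; if instead it lies on $K_0+ie_1=\hat f^k(O)+\overline w+ie_1$, then applying $\hat f^{-k}$ gives $\hat f^{n-k}(O)\cap(O+\overline w+ie_1)\neq\emptyset$, which is impossible for $i=0$ (it would contradict the hypothesis) and which, by Proposition \ref{pr:pintasegmento} applied along the row $\{O+\overline w+ie_1\}_{i\in\Z}$, cannot hold simultaneously for an $i>0$ and an $i<0$ at the same $n$ without again contradicting the hypothesis. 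Carrying out this analysis together with the analogous one for $\Gamma_2$ confines the ``bad'' crossings to a fixed one-sided index set and, using that $\hat f^n(O)$ reaches all four far quadrants, forces $\hat f^n(O)$ to meet $O+v+i'e_1$ (or $O+v+i'e_2$) for indices $i'$ lying on both sides of $0$; a last application of Proposition \ref{pr:pintasegmento} then yields $\hat f^n(O)\cap(O+v)\neq\emptyset$, i.e. $v\in I(n,(0,0))$, for all $n>n_0$. I expect the bookkeeping in this last step — ruling out that $\hat f^n(O)$ crosses the Brouwer lines only through translates of $\hat f^k(O)+\overline w$, so that it is pushed through a translate of $O+v$ at indices straddling $0$ — to be the main technical obstacle; the construction of the arcs, of the Brouwer lines, and the forced crossing via Proposition \ref{pr:intersquadrantes} are routine given the results already established.
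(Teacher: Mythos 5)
Your setup is in the right direction — the reductions are correct, and the use of Lemma \ref{lm:inbetween} to produce translation arcs from the hypothesis $S(v,1)\subset I(k,\overline w)$ is exactly what the paper does. But the second half of your argument has a genuine gap, and it is not merely bookkeeping. You build the bi-infinite Brouwer lines $\Gamma_1,\Gamma_2$, observe that $\hat f^n(O)$ must cross them, and then try to read off $v\in I(n,(0,0))$ by locating the crossing. The problem is that nothing you have proved forces the crossings with translates of $O+v$ to occur at indices $i'$ on both sides of $0$. Consider the scenario in which $\hat f^n(O)$ meets $\Gamma_1$ only at translates $O+v+i e_1$ and $K_0+i e_1$ with $i\geq 1$, and meets $\Gamma_2$ only at translates with $j\geq 1$: nothing you argue excludes this. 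Proposition \ref{pr:pintasegmento} rules out ``bad'' crossings $K_0+ie_1$ occurring at indices of both signs (by pulling back by $\hat f^{-k}$), but it says nothing when the good and bad crossings are distributed differently on the two sides; the two families of translates $\{O+v+ie_1\}$ and $\{K_0+ie_1\}$ are not a single row to which Proposition \ref{pr:pintasegmento} applies. So the final step ``forces $\hat f^n(O)$ to meet $O+v+i'e_1$ for indices $i'$ lying on both sides of $0$'' is simply asserted, not established, and I do not see how to complete it along these lines.

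The missing idea is Lemma \ref{lm:Brouwerline}, which you cite only in passing to justify that $\Gamma_1$ separates the plane, but do not actually use. The paper argues by contradiction: if $v\notin I(n_i,(0,0))$ for a sequence $n_i\to\infty$, then (after a preliminary application of Proposition \ref{pr:pintasegmento} to also kill $v+(1,0)$ and $v+(0,1)$, say) $\hat f^{n_i}(O)$ is disjoint from $T_v(O)$ and from $K_2=\hat f^k(T_{\overline w}(O))$, hence disjoint from the translation arcs $[\alpha],[\beta]$ and the connector $[\gamma]$. Lemma \ref{lm:Brouwerline} then says that a connected set disjoint from a translation arc must in fact be disjoint from one of the two \emph{half}-orbits $\alpha^+$ or $\alpha^-$ (and similarly for $\beta^\pm$). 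This is a genuine Brouwer-theoretic input — it is what turns the local disjointness from a single arc into a global disjointness from a one-sided half-line — and it is precisely what lets one assemble a separating set $F_j=\alpha^{\pm}\cup\beta^{\pm}\cup[\gamma]$ that $\hat f^{n_i}(O)$ avoids, contradicting Proposition \ref{pr:intersquadrantes}. Your crossing analysis would, if anything, need to re-prove a version of Lemma \ref{lm:Brouwerline} from scratch, which is why the ``bookkeeping'' you defer is actually the heart of the matter.
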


\proof
Assume, by contradiction, that there exists a sequence $(n_i)_{i\in\N}$ such that $\lim_{i\to\infty}n_i=\infty$ and such that, $v\notin I(n_i,(0,0))$ for all $i\in \N$.  Then, by proposition \ref{pr:pintasegmento}, we cannot have that both $v-(1,0)$ and $v+(1,0)$ belong to $I(n_i,(0,0))$ simultaneously, and neither can $v-(0,1)$ and $v+(0,1)$. Thus we may assume, with no loss of generality, that for infinitely many values of $i$ neither $v+(1,0)$ nor $v+(0,1)$ belong to $I(n_i,(0,0))$ (since the other cases are analogous). This means that
$$\hat f^{n_i}(O)\cap \left(T_v(O)\cup T_{v+(1,0)}(O)\cup T_{v+(0,1)}(O)\right)=\emptyset.$$
Extracting a subsequence of $(n_i)_{i\in \N}$ we may (and will) assume that the latter holds for \emph{all} $i\in \N$.

Let $K_2=\hat f^{k}(T_{\ol{w}}(O)).$  Since $S(v,1)\subset I(k,\ol{w}),$ it follows that $K_2$ intersects the open connected sets $T_v(O)$ and $T_{(1,0)}(T_v(O))$ and therefore lemma \ref{lm:inbetween} (with $K_1=T_v(O)$) implies that there exists a $T_{(1,0)}-$translation arc $\alpha$ contained in $T_v(O)\cup K_2$ joining a point $x_1\in T_v(O)$ to $T_{(1,0)}(x_1).$
Note that the assumption that $\hat{f}^n(O)$ is disjoint of $T_{\ol{w}}(O)$ for all $n\in\Z$ implies that  $K_2\cap \hat f^{n_i}(O)=\emptyset$ for all $i \in \N$. Furthermore, since $v\notin I(n_i,(0,0))$, the set $\hat f^{n_i}(O)$ is also disjoint from $T_v(O)$. Thus $[\alpha]\cap \hat f^{n_i}(O)=\emptyset$ for all $i\in \N$. 

A similar reasoning shows that  there exists a translation arc $\beta\subset T_v(O)\cup K_2,$ joining a point $x_2\in T_v(O)$ to $T_{(0,1)}(x_2)$, such that $[\beta]\cap \hat f^{n_i}(O)=\emptyset$ for all $i\in \N$.

Let $\gamma\colon [0,1]\to T_v(O)$ be an arc joining $x_1$ to $x_2$, and define 
$$\alpha^{+}=\bigcup_{i\in\N}T^i_{(1,0)}([\alpha]),\,\, \alpha^{-}=\bigcup_{i\in\N}T^{-i}_{(1,0)}([\alpha]),\,\, \beta^{+}=\bigcup_{i\in\N}T^i_{(0,1)}([\beta]),\,\, \beta^{-}=\bigcup_{i\in\N}T^{-i}_{(0,1)}([\beta]).$$

Since $\hat f^{n_i}(O)$ is disjoint from $[\alpha] \cup [\beta] \cup [\gamma]$, it follows from lemma \ref{lm:Brouwerline} that $\hat f^{n_i}(O)$ is disjoint from at least one of the following 4 connected sets:
$$F_1=\alpha^{+}\cup\beta^{+}\cup[\gamma], \, F_2=\alpha^{+}\cup\beta^{-}\cup[\gamma],$$
$$F_3=\alpha^{-}\cup\beta^{+}\cup[\gamma], \, F_4=\alpha^{-}\cup\beta^{-}\cup[\gamma].$$

In particular there is $j\in\{1,2,3,4\}$ such that $\hat{f}^{n_i}(O)$ is disjoint from $F_j$ for infinitely many values of $i\in \N$. We assume that $j=1$, since the other cases are analogous. Extracting again a subsequence of $(n_i)_{i\in \N}$, we may assume that $f^{n_{i}}(O)$ is disjoint from $F_1$ for all $i\in \N$.

  Let $R_1=\max_{y\in\alpha} \norm{y}$, $R_2=\max_{y\in\beta}\norm{y}$, $R_3=\max_{y\in\gamma}\norm{y},$ and note that $\max_{y\in\alpha^{+}}(y)_2\le R_1$,  and $ \max_{y\in\beta^{+}}(y)_1\le R_2.$

Finally, let $R=\max\{R_1,R_2,R_3\},$ so that 
$$F_1\subset \{y : \abs{(y)_1}\leq R\}\cup \{y : \abs{(y)_2}\leq R\},$$ 
and observe that the sets 
$$ U_1 = \{y : (y)_1>R, (y)_2>R\},\quad  U_2=\{y : (y)_1<-R, (y)_2>R\}$$
lie in distinct connected components of  $\R^2\sm F_1$ (see figure \ref{fig:3}).

\begin{figure}[ht]
\begin{center}\includegraphics[height=6cm]{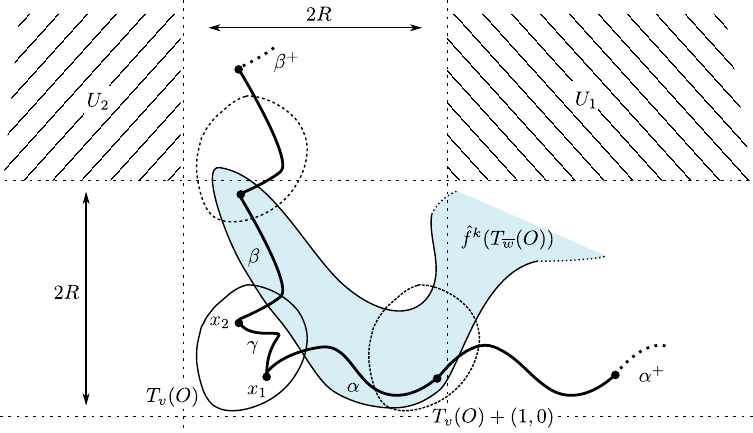}
\end{center}
\caption{}
\label{fig:3}
\end{figure}

As $\hat f^{n_i}(O)$ is connected and contained in $\R^2\sm F_1$, for each $i\in \N$ the set $\hat{f}^{n_i}(O)$ is disjoint from either $U_1$ or $U_2$. This contradicts proposition \ref{pr:intersquadrantes}, completing the proof of the claim.
\endproof

Let us denote by $R_{\ol{w}}(v)= 2\ol{w}-v=T^2_{\ol{w}-v}(v)$ the symmetric point of $v$ with respect to $\ol{w}$.

\begin{claim}\label{cl:pintaquadradoeoposto}
There exist $k_1, k_2\in \Z$ and $\ol{v}\in \Z^2$ such that $S(\ol{v},1)\subset I(k_1,\ol{w})$ and $S(R_{\ol{w}}(\ol{v}),1)\subset I(k_2,\ol{w}).$
\end{claim}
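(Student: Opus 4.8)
The goal is to produce two large ``painted squares'' in the index set $I(\cdot,\ol w)$ — one around a point $\ol v$ and one around its reflection $R_{\ol w}(\ol v)$ — so that Claim~\ref{cl:mainclaim} can later be applied at both locations. I would derive this directly from Proposition~\ref{pr:intersquadrantes} together with Lemma~\ref{lm:pintasubquadrado}. The idea is that, for arbitrarily large $n$, the set $\hat f^n(O)$ meets all four ``corner'' regions $U_1,\dots,U_4$, and hence (being connected) its projection to $\Z^2$ via the index sets $I(n+j,v)$ with $(j,v)\in\Sigma(2M)$ must contain lattice points in opposite corners of a huge square $S(w_1,C)$. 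Running Lemma~\ref{lm:pintasubquadrado} with the target radius $C_1=1$ then gives, inside any chosen corner of $S(w_1,C)$, a unit square $S(w_2,1)$ contained in $I(n+\ol j,\ol w)$ for some $\ol j\in[-2M,2M]$.

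Concretely, first I would apply Lemma~\ref{lm:pintasubquadrado} with $C_1=1$ to obtain the constant $C_2$. Then I would choose $C>C_2$ very large and $n$ large enough that both the hypothesis of Lemma~\ref{lm:pintasubquadrado} (for the four squares $S((\pm C,\pm C),C)$) and the conclusion of Proposition~\ref{pr:intersquadrantes} (for a suitable $R$) hold simultaneously — this is possible since both conditions only require $n$ large. The key geometric point: if $C$ is large compared with the constants $M$, $R$, and the diameter of $O$, then the square $S((C,C),C)$ sits entirely inside the corner region $U_1$, and similarly the opposite square $S((-C,-C),C)$ sits inside $U_4$. Because $\hat f^n(O)$ is connected and meets both $U_1$ and $U_4$, one can arrange (by enlarging $C$ or translating) that $\hat f^n(O)$ has nonempty intersection with each of these two far-apart squares; equivalently, each of $S((C,C),C)$ and $S((-C,-C),C)$ contains a lattice point lying in $\bigcup_{(j,v)\in\Sigma(2M)}I(n+j,v)$. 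Feeding each of these two squares into Lemma~\ref{lm:pintasubquadrado} produces $\ol v_1\in S((C,C),C)$ with $S(\ol v_1,1)\subset I(n+\ol j_1,\ol w)$ and $\ol v_2\in S((-C,-C),C)$ with $S(\ol v_2,1)\subset I(n+\ol j_2,\ol w)$, setting $k_1=n+\ol j_1$, $k_2=n+\ol j_2$.

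It remains to get the \emph{reflection} relation $\ol v_2 = R_{\ol w}(\ol v_1)$ rather than merely ``$\ol v_1,\ol v_2$ in opposite corners''. Here I would use the freedom in the choice of $\ol v$: the point $R_{\ol w}(\ol v_1) = 2\ol w - \ol v_1$ lies in the corner opposite to $\ol v_1$, and — provided $C$ was taken large enough relative to $\|\ol w\|$ — it lies well inside $U_4$; so instead of extracting an arbitrary painted unit square from $S((-C,-C),C)$, I would note that $R_{\ol w}(\ol v_1)$ itself, along with a surrounding square, already belongs to some $I(n+j,v)$ because the reflected square $R_{\ol w}(S(\ol v_1,C'))$ is again covered, for a suitable radius $C'$, by the index sets. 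Alternatively, and more cleanly: apply Lemma~\ref{lm:pintasubquadrado} twice but with the second square centered at the \emph{reflection} of the first painted square's center, enlarging $C$ once more so the reflected square still fits inside $S((-C,-C),C)\subset U_4$. The main obstacle I anticipate is precisely this bookkeeping — matching up the two painted squares so one is the exact $R_{\ol w}$-image of the other, while simultaneously keeping both inside their respective corner regions and within range of the covering $\Sigma(2M)$; all of it is routine once $C$ and $n$ are chosen in the right order, but the order of quantifiers must be handled carefully.
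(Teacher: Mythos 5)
Your proposal correctly identifies Lemma~\ref{lm:pintasubquadrado} as the main tool and correctly anticipates the central difficulty — that the lemma paints a unit square \emph{somewhere inside} the target square $S(w_1,C)$, but gives you no control over \emph{which} unit square, so the exact reflection relation $\ol v_2 = R_{\ol w}(\ol v_1)$ does not come for free. However, the two fixes you sketch do not actually close this gap. Saying that ``$R_{\ol w}(\ol v_1)$ itself, along with a surrounding square, already belongs to some $I(n+j,v)$ because the reflected square is covered by the index sets'' conflates being covered by a \emph{union} of index sets (which is what Proposition~\ref{pr:propertiesofsetofindices}(1) provides) with being contained in a \emph{single} $I(n+j,v)$ (which is the nontrivial conclusion that the pigeonhole argument inside Lemma~\ref{lm:pintasubquadrado} is designed to extract). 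And the alternative — applying the lemma a second time with $w_1 = R_{\ol w}(\ol v_1)$ — only hands you a unit square somewhere in $S(R_{\ol w}(\ol v_1),C)$, not one centered at $R_{\ol w}(\ol v_1)$. Simply ``enlarging $C$'' does not help, because enlarging $C$ does not shrink the uncertainty in where the painted square lands.

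The missing idea is to apply Lemma~\ref{lm:pintasubquadrado} at two different scales and in the right order. First paint a \emph{large} square: with $C_1 = C_2+1$ (where $C_2$ is the constant the lemma produces for $C_1=1$), obtain $v_1$ and $k_2 = n_1+j_1$ with $S(v_1, C_2+1) \subset I(k_2, \ol w)$. Next, aim at the reflection of that square: apply the lemma with $C_1=1$ and $w_1 = R_{\ol w}(v_1)$ to obtain a \emph{unit} painted square $S(v_2,1) \subset S(R_{\ol w}(v_1), C_2+1) \cap I(k_1, \ol w)$ for some $k_1$. Now set $\ol v = v_2$. The point is that even though $v_2$ is not exactly $R_{\ol w}(v_1)$, it satisfies $\norm{v_2 - R_{\ol w}(v_1)}_\infty \le C_2$, and since $R_{\ol w}$ is an isometry for $\norm{\cdot}_\infty$ this gives $\norm{R_{\ol w}(v_2) - v_1}_\infty \le C_2$; hence $S(R_{\ol w}(\ol v),1) = R_{\ol w}(S(\ol v,1)) \subset S(v_1, C_2+1) \subset I(k_2, \ol w)$. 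The large first square absorbs the positional slack in the second application. Incidentally, Proposition~\ref{pr:intersquadrantes} and the corner regions $U_1,\dots,U_4$ play no role here; they are used in Claim~\ref{cl:mainclaim} but not in this one, so bringing them in is an unnecessary detour.
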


\proof

Let $C_2>0$ be such that the conclusion of lemma \ref{lm:pintasubquadrado} holds with $C_1=1$. Using again lemma \ref{lm:pintasubquadrado} but setting $C_1=C_2+1$, and $w_1=(0,0)$, there exists $C_2'>0$, $n_1\in \N$, an integer $j_1\in [-2M,2M]$, and $v_1\in \Z^2$ such that $$S(v_1,C_2+1)\subset S((0,0),C_2'+1) \cap I(n_1+j_1, \ol{w}).$$ 

Due to our choice of $C_2$, lemma \ref{lm:pintasubquadrado} applied with $C_1=1$ and $w_1=R_{\ol{w}}(v_1)$ implies that there exist $n_2\in \N$, an integer $j_2\in [-2M, 2M]$ and $v_2\in \Z^2$ such that 
$$S(v_2,1)\subset S(R_{\ol{w}}(v_1),C_2+1)\cap I(n_2+j_2,\ol{w}).$$

Let $\ol{v}=v_2$, $k_1=n_2+j_2$ and $k_2=n_1+j_1$. Then, $S(\ol{v},1)\subset I(k_1,\ol{w})$, while 
$$S(R_{\ol{w}}(\ol{v}),1) = R_{\ol{w}}(S(\ol{v},1))\subset R_{\ol{w}}(S(R_{\ol{w}}(v_1),C_2+1)) = S(v_1, C_2+1)\subset I(k_2,\ol{w}),$$
so the claim follows (see figure \ref{fig:claim2}).
\begin{figure}[ht]
\includegraphics[height=4cm]{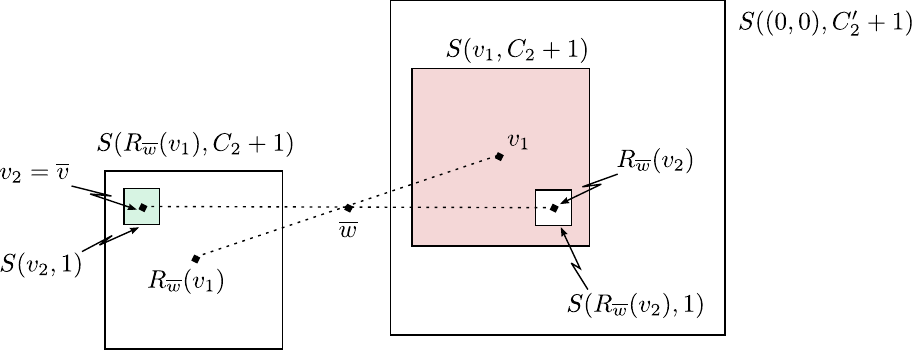}
\caption{}
\label{fig:claim2}
\end{figure}
\endproof

To complete the proof of theorem \ref{th:maintheorem}, suppose, suppose that $\hat f^{n}(O)\cap T_{\ol{w}}(O)= \emptyset$ for all $n\in \Z$.
Claims \ref{cl:mainclaim} and \ref{cl:pintaquadradoeoposto} together imply that, for sufficiently large $n,$  both $\ol{v} \in I(n,(0,0))$ and $2\ol{w}- \ol{v} =R_{\ol{w}}(\ol{v}) \in I(n,(0,0))$. This means that  $\hat f^{n}(O) \cap  T_{\ol{v}}(O)\neq \emptyset$ and $\hat f^{n}(O) \cap  T_{2\ol{w}-\ol{v}}(O) \neq \emptyset$. Letting $K_1= T_{\ol{w}}(O)$ and $K_2= \hat f^{n}(O)$, it follows that $T_{\ol{w}-\ol{v}}^{-1}(K_1)\cap K_2\neq \emptyset$ and $T_{\ol{w}-\ol{v}}(K_1)\cap K_2\neq \emptyset$. Thus, lemma \ref{lm:inbetween} (with $v=\ol{w}-\ol{v}$) implies that $K_1$ intersects $K_2$, \ie $\hat f^{n}(O)\cap T_{\ol{w}}(O)\neq \emptyset$. This contradicts our assumption at the beginning of this paragraph, completing the proof. \qed

\bibliographystyle{amsalpha}

\end{document}